\newcommand{\assgn}{\ensuremath\mathrel{\mathop:}=}
\newcommand{\sgn}{\operatorname{sgn}}
\newcommand{\Sym}{\operatorname{\mathsf{Sym}}}
\newcommand{\FMA}{\operatorname{\mathtt{fma}}}
\newcommand{\RCP}{\operatorname{\mathtt{rcp}}}
\newcommand{\RSQRT}{\operatorname{\mathtt{rsqrt}}}
\newcommand{\bdiv}{\operatorname{\mathbf{div}}}
\newcommand{\ct}{\operatorname{ct}}
\newcommand{\tn}{\operatorname{tn}}
\newcommand{\cs}{\operatorname{cs}}
\newcommand{\rn}{\operatorname{rn}}
\newcommand{\ru}{\operatorname{ru}}
\newcommand{\rz}{\operatorname{rz}}
\newcommand{\fl}{\operatorname{\mathit{f\mkern-2mu\ell}}}
\newcommand{\hpz}{\hphantom{0}}
\title{A hierarchically blocked Jacobi SVD algorithm
  for~single~and~multiple~graphics~processing~units\thanks{This work
    was supported in part by grant 037--1193086--2771 from Ministry of
    Science, Education and Sports, Republic of Croatia, and by
    NVIDIA's Academic Partnership Program.}}
\author{Vedran Novakovi\'{c}\footnotemark[2]}
\begin{document}
\maketitle
\renewcommand{\thefootnote}{\fnsymbol{footnote}}
\footnotetext[2]{University of Zagreb, Faculty of Mechanical
  Engineering and Naval Architecture, I.~Lu\v{c}i\'{c}a 5, HR-10000
  Zagreb, Croatia (venovako@fsb.hr),
  http://www.fsb.unizg.hr/venovako/.}
\renewcommand{\thefootnote}{\arabic{footnote}}
\begin{abstract}
  We present a hierarchically blocked one-sided Jacobi algorithm for
  the singular value decomposition (SVD), targeting both single and
  multiple graphics processing units (GPUs).  The blocking structure
  reflects the levels of GPU's memory hierarchy.  The algorithm may
  outperform MAGMA's \texttt{dgesvd}, while retaining high relative
  accuracy.  To this end, we developed a family of parallel pivot
  strategies on GPU's shared address space, but applicable also to
  inter-GPU communication.  Unlike common hybrid approaches, our
  algorithm in a single GPU setting needs a CPU for the controlling
  purposes only, while utilizing GPU's resources to the fullest extent
  permitted by the hardware.  When required by the problem size, the
  algorithm, in principle, scales to an arbitrary number of GPU nodes.
  The scalability is demonstrated by more than twofold speedup for
  sufficiently large matrices on a Tesla S2050 system with four GPUs
  vs.~a single Fermi card.
\end{abstract}
\begin{keywords}
  Jacobi (H)SVD, parallel pivot strategies, graphics processing units
\end{keywords}
\begin{AMS}
  65Y05, 65Y10, 65F15
\end{AMS}
\pagestyle{myheadings}
\thispagestyle{plain}
%
%
\section{Introduction}\label{sec:1}
%
%
Graphics processing units have become a widely accepted tool of
parallel scientific computing, but many of the established algorithms
still need to be redesigned with massive parallelism in mind.  Instead
of multiple CPU cores, which are fully capable of simultaneously
processing different operations, GPUs are essentially limited to many
concurrent instructions of the same kind---a paradigm known as SIMT
(single-instruction, multiple-threads) parallelism.

SIMT type of parallelism is not the only reason for the redesign.
Modern CPU algorithms rely on (mostly automatic) multi-level cache
management for speedup.  GPUs instead offer a complex memory
hierarchy, with different access speeds and patterns, and both
automatically and programmatically managed caches.  Even more so than
in the CPU world, a (less) careful hardware-adapted blocking of a GPU
algorithm is the key technique by which considerable speedups are
gained (or lost).

After the introductory paper~\cite{Novakovic-SingerSanja-2011}, here
we present a family of the full
block~\cite{Hari-SingerSanja-SingerSasa-2014} and the
block-oriented~\cite{Hari-SingerSanja-SingerSasa-2010} one-sided
Jacobi-type algorithm variants for the ordinary (SVD) and the
hyperbolic singular value decomposition (HSVD) of a matrix, targeting
both a single and the multiple GPUs.  The blocking of our algorithm
follows the levels of the GPU memory hierarchy; namely, the innermost
level of blocking tries to maximize the amount of computation done
inside the fastest (and smallest) memory of the registers and manual
caches.  The GPU's global RAM and caches are considered by the
mid-level, while inter-GPU communication and synchronization are among
the issues addressed by the outermost level of blocking.

At each blocking level an instance of either the block-oriented or the
full block Jacobi (H)SVD is run, orthogonalizing pivot columns or
block-columns by conceptually the same algorithm at the lower level.
Thus, the overall structure of the algorithm is hierarchical (or
recursive) in nature, and ready to fit not only the current GPUs, but
also various other memory and communication hierarchies, provided that
efficient, hardware-tuned implementations at each level are
available.

The Jacobi method is an easy and elegant way to find the eigenvalues
and eigenvectors of a symmetric matrix.  In 1958
Hestenes~\cite{Hestenes-58} developed the one-sided Jacobi SVD
method---an implicit diagonalization is performed by orthogonalizing a
factor of a symmetric positive definite matrix.  But, after discovery
of the QR algorithm in 1961/62 by Francis and Kublanovskaya, the
Jacobi algorithm seemed to have no future, at least in the sequential
processing world, due to its perceived slowness~\cite{Eberlein-87}.
However, a new hope for the algorithm has been found in its
amenability to parallelization, in its proven high relative
accuracy~\cite{Demmel-Veselic-92}, and finally in the emergence of the
fast Jacobi SVD implementation in LAPACK, due to Drma\v{c} and
Veseli\'{c}~\cite{Drmac-Veselic-2008,Drmac-Veselic-2008a}.

In the beginning of the 1970s Sameh in~\cite{Sameh-71} developed two
strategies for parallel execution of the Jacobi method on Illiac IV\@.
The first of those, the modulus strategy, is still in use, and it is
one of the very rare parallel strategies for which a proof of
convergence exists~\cite{LukF-Park-89a}.

In the mid 1980s, Brent and Luk designed another parallel
strategy~\cite{Brent-LukF-85}, known by the names of its creators.
The same authors, together with Van Loan~\cite{Brent-LukF-VanLoan-85},
described several parallel one-sided Jacobi and Kogbetliantz (also
known as ``the two-sided Jacobi'') algorithms.  The parallel block
Kogbetliantz method is developed in~\cite{VanLoan-86}.

In 1987 Eberlein~\cite{Eberlein-87} proposed two strategies, the
round-robin strategy, and another one that depends on the parity of a
sweep.  A new efficient recursive divide-exchange parallel strategy,
specially designed for the hypercube topologies (and, consequently,
matrices of order $2^n$) is given in~\cite{Gao-Thomas-88}.  This
strategy is later refined by Mantharam and Eberlein
in~\cite{Mantharam-Eberlein-93} to the block-recursive (BR) strategy.

Two papers by Luk and Park~\cite{LukF-Park-89,LukF-Park-89a} published
in 1989 established equivalence between numerous strategies, showing
that if one of them is convergent, then all equivalent strategies are
convergent.  In the same year Shroff and
Schreiber~\cite{Shroff-Schreiber-89} showed convergence for a family
of strategies called the wavefront ordering, and discussed the
parallel orderings weakly equivalent to the wavefront ordering, and
thus convergent.

One of the first attempts of a parallel SVD on a GPU is a hybrid one,
by Lahabar and Narayanan~\cite{Lahabar-Narayanan-2009}.  It is based
on the Golub--Reinsch algorithm, with bidiagonalization and updating
of the singular vectors performed on a GPU, while the rest of the
bidiagonal QR algorithm is computed on a CPU\@.  In
MAGMA\footnote{Matrix Algebra on GPU and Multicore Architectures,
  http://icl.utk.edu/magma/}, a GPU library of the LAPACK-style
routines, \texttt{dgesvd} algorithm is also hybrid, with
bidiagonalization (\texttt{DGEBRD}) parallelized on a
GPU~\cite{Tomov-Nath-Dongarra-2010}, while for the bidiagonal QR,
LAPACK routine \texttt{DBDSQR} is used.  We are unaware of any
multi-GPU SVD implementations.

In two of our previous
papers~\cite{SingerSanja-SingerSasa-Novakovic-Uscumlic-Dunjko-2012,SingerSanja-SingerSasa-Novakovic-Davidovic-Bokulic-Uscumlic-2012}
we discussed the parallel one-sided Jacobi algorithms for the
hyperbolic SVD with two and three levels of blocking, respectively.
The outermost level is mapped to a ring of CPUs which communicate
according to a slightly modified modulus strategy, while the inner two
(in the three-level case) are sequential and correspond to the
``fast'' (L1) and ``slow'' (L2 and higher) cache levels.

At first glance a choice of the parallel strategy might seem as a
technical detail, but our tests at the outermost level have shown that
the modified modulus strategy can be two times faster than the
round-robin strategy.  That motivated us to explore if and how even
faster strategies could be constructed, that preserve the accuracy of
the algorithm.  We present here a class of parallel strategies
designed around a conceptually simple but computationally difficult
notion of a metric on a set of strategies of the same order.  These
new strategies can be regarded as generalizations of the
Mantharam--Eberlein BR strategy to all even matrix orders,
outperforming the Brent and Luk and modified modulus strategies in our
GPU algorithm.

However, a parallel strategy alone is not sufficient to achieve decent
GPU performance.  The standard routines that constitute a block Jacobi
algorithm, like the Gram matrix formation, the Cholesky (or the QR)
factorization, and the pointwise one-sided Jacobi algorithm itself,
have to be mapped to the fast, but in many ways limited shared memory
of a GPU, and to the peculiar way the computational threads are
grouped and synchronized.  Even the primitives that are usually taken
for granted, like the numerically robust calculation of a vector's
$2$-norm, present a challenge on a SIMT architecture.  Combined with
the problems inherent in the block Jacobi algorithms, whether
sequential or parallel, like the reliable convergence criterion, a
successful design of the Jacobi-type GPU (H)SVD is far from trivial.

In this paper we show that such GPU-centric design is possible and
that the Jacobi-type algorithms for a single and the multiple GPUs
compare favorably to the present state-of-the-art in the GPU-assisted
computation of the (H)SVD\@.  Since all computational work is
offloaded to a GPU, we need no significant CPU $\leftrightarrow$ GPU
communication nor complex synchronization of their tasks.  This
facilitates scaling to a large number of GPUs, while keeping their
load in balance and communication simple and predictable.  While many
questions remain open, we believe that the algorithms presented here
are a valuable choice to consider when computing the (H)SVD on the
GPUs.

The paper is organized as follows.  In Section~\ref{sec:2} a brief
summary of the one-sided Jacobi-type (H)SVD block algorithm variants
is given.  In Section~\ref{sec:3} new parallel Jacobi
strategies---nearest to row-cyclic and to column-cyclic are
developed.  The main part of the paper is Section~\ref{sec:4}, where a
detailed implementation of a single-GPU Jacobi (H)SVD algorithm is
described.  In Section~\ref{sec:5}, a proof-of-concept implementation
on multiple GPUs is presented.  In Section~\ref{sec:6}, results of the
numerical testing are commented.  Two appendices complete the paper
with a parallel, numerically stable procedure for computing the
$2$-norm of a vector, and some considerations about the Jacobi
rotation formulas.
%
%
\section{Jacobi--type SVD algorithm}\label{sec:2}
%
%
Suppose that a matrix $G\in\mathbb{F}^{m\times n}$, where $\mathbb{F}$
denotes the real ($\mathbb{R}$) or the complex ($\mathbb{C}$) field,
is given.  Without loss of generality, we may assume that $m \geq n$.
If not, instead of $G$, the algorithm will transform $G^{\ast}$.

If $m\gg n$, or if the column rank of $G$ is less than $n$, then the
first step of the SVD is to preprocess $G$ by the QR factorization
with column pivoting~\cite{Drmac-97a} and, possibly, row pivoting or
row presorting,
\begin{equation}
  G = P_r Q R P_c
  = P_r Q \begin{bmatrix}
    R_0 \\
    0
  \end{bmatrix} P_c,
\label{2.1}
\end{equation}
where $Q$ is unitary, $R_0\in\mathbb{F}^{k\times n}$ is upper
trapezoidal with the full row rank $k$, while $P_r$ and $P_c$ are
permutations.  If $k<n$, then $R_0$ should be factored by the LQ
factorization,
\begin{equation}
  R_0 = P'_r L Q'_{} P'_c
  = P'_r \begin{bmatrix}
    L_0 & 0
  \end{bmatrix} Q'_{} P'_c.
\label{2.2}
\end{equation}
Finally, $L_0\in\mathbb{F}^{k\times k}$ is a lower triangular matrix
of full rank.  From the SVD of $L_0$, by (\ref{2.1}) and (\ref{2.2}),
it is easy to compute the SVD of $G$.  Thus, we can assume that the
initial $G$ is square and of full rank $n$, with $n \geq 2$.

The one-sided Jacobi SVD algorithm for $G$ can be viewed as the
implicit two-sided Jacobi algorithm which diagonalizes either
$G^{\ast}G$ or $GG^{\ast}$.  Let, e.g., $H\assgn G^{\ast}G$.
Stepwise, a suitably chosen pair of pivot columns $g_p$ and $g_q$ of
$G$ is orthogonalized by postmultiplying the matrix
$\begin{bmatrix}g_p^{} & g_q^{}\end{bmatrix}$ by a Jacobi plane
rotation $\widehat{V}_{pq}$, which diagonalizes the $2\times 2$ pivot
matrix $\widehat{H}_{pq}$,
\begin{equation}
  \widehat{H}_{pq} = \begin{bmatrix}
    h_{pp}^{} & h_{pq}^{}\\
    h_{pq}^{\ast} & h_{qq}^{}
  \end{bmatrix}
  = \begin{bmatrix}
    g_p^{\ast} g_p^{} & g_p^{\ast} g_q^{}\\
    g_q^{\ast} g_p^{} & g_q^{\ast} g_q^{}
  \end{bmatrix}
  = \begin{bmatrix}
    g_p^{\ast}\\
    g_q^{\ast}
  \end{bmatrix}
  \begin{bmatrix}
    g_p^{}&
    g_q^{}
  \end{bmatrix},
\label{2.3}
\end{equation}
such that
\begin{equation}
  \widehat{V}_{pq}^{\ast} \widehat{H}_{pq}^{} \widehat{V}_{pq}^{} = \diag(\hat{\lambda}_p^{}, \hat{\lambda}_q^{}).
\label{2.4}
\end{equation}

In case of convergence, the product of transformation matrices will
approach the set of eigenvector matrices.  Let $V$ be an eigenvector
matrix of $H$. Then
\begin{displaymath}
  \Lambda = V^{\ast}HV = (V^{\ast}G^{\ast})(GV), \quad
  \Lambda = \diag(\lambda_1, \lambda_2, \ldots, \lambda_n).
\end{displaymath}
The resulting matrix $GV$ has orthogonal columns, and can be written
as
\begin{equation}
  GV=U\Sigma,
\label{2.5}
\end{equation}
where $U$ is unitary and $\Sigma = \Lambda^{1/2}$ is a diagonal matrix
of the column norms of $GV$.

The matrix $U$ of the left singular vectors results from scaling the
columns of $GV$ by $\Lambda^{-1/2}$, so only the right singular
vectors $V$ have to be obtained, either by accumulation of the Jacobi
rotations applied to $G$, or by solving the linear system~(\ref{2.5})
for $V$, with the initial $G$ preserved.  The system (\ref{2.5}) is
usually triangular, since $G$ is either preprocessed in such a form,
or already given as a Cholesky factor in an eigenproblem computation.
Solving (\ref{2.5}) is therefore faster than accumulation of $V$, but
it needs more memory and may be less accurate if $G$ is not
well-conditioned (see~\cite{Drmac-99}).

The choice of pivot indices $p$, $q$ in successive steps is essential
for possible parallelization of the algorithm.  We say that two pairs
of indices, $(p,q)$ and $(p',q')$, are \emph{disjoint\/}, or
\emph{non-colliding\/}, if $p \neq q$, $p' \neq q'$, and
$\{p,q\} \cap \{p',q'\} = \emptyset$.  Otherwise, the pairs are called
\emph{colliding\/}.  These definitions are naturally extended to an
arbitrary number of pairs.  The pairs of indexed objects (e.g., the
pairs of matrix columns) are disjoint or (non-)colliding, if such are
the corresponding pairs of the objects' indices.

The one-sided Jacobi approach is better suited for parallelization
than the two-sided one, since it can simultaneously process disjoint
pairs of columns.  This is still not enough to make a respectful
parallel algorithm.  In the presence of a memory hierarchy, the
columns of $G$ and $V$ should be grouped together into block-columns,
\begin{equation}
  G = \begin{bmatrix}
    G_{\mathsf{1}} & G_{\mathsf{2}} & \cdots & G_{\mathsf{b}}
  \end{bmatrix}, \quad
  V = \begin{bmatrix}
    V_{\mathsf{1}} & V_{\mathsf{2}} & \cdots & V_{\mathsf{b}}
  \end{bmatrix}.
\label{2.6}
\end{equation}
In order to balance the workload, the block-columns should be (almost)
equally sized.

Usually, a parallel task processes two block-columns $G_{\mathsf{p}}$
and $G_{\mathsf{q}}$, i.e., a single pivot block-pair, either by
forming the pivot block-matrix $H_{\mathsf{p}\mathsf{q}}$ and its
Cholesky factor $R_{\mathsf{p}\mathsf{q}}$,
\begin{equation}
  H_{\mathsf{p}\mathsf{q}} = \begin{bmatrix}
    G_{\mathsf{p}}^{\ast} G_{\mathsf{p}}^{} & G_{\mathsf{p}}^{\ast} G_{\mathsf{q}}^{}\\
    G_{\mathsf{q}}^{\ast} G_{\mathsf{p}}^{} & G_{\mathsf{q}}^{\ast} G_{\mathsf{q}}^{}
  \end{bmatrix}
  = \begin{bmatrix}
    G_{\mathsf{p}}^{\ast}\\
    G_{\mathsf{q}}^{\ast}
  \end{bmatrix}
  \begin{bmatrix}
    G_{\mathsf{p}}^{} & G_{\mathsf{q}}^{}
  \end{bmatrix}, \quad
  P^{\ast} H_{\mathsf{p}\mathsf{q}}^{} P^{} = R_{\mathsf{p}\mathsf{q}}^{\ast} R_{\mathsf{p}\mathsf{q}}^{},
\label{2.7}
\end{equation}
or by shortening the block-columns
$\begin{bmatrix}G_{\mathsf{p}} & G_{\mathsf{q}}\end{bmatrix}$
directly, by the QR factorization,
\begin{equation}
  \begin{bmatrix}
    G_{\mathsf{p}} & G_{\mathsf{q}}
  \end{bmatrix} P
  = Q_{\mathsf{p}\mathsf{q}}
  \begin{bmatrix}
    R_{\mathsf{p}\mathsf{q}} \\ 0
  \end{bmatrix}.
\label{2.8}
\end{equation}

The diagonal pivoting in the Cholesky factorization, or analogously,
the column pivoting in the QR factorization should be employed, if
possible
(see~\cite{SingerSanja-SingerSasa-Novakovic-Uscumlic-Dunjko-2012} for
further discussion, involving also the hyperbolic SVD case).  However,
the pivoting in factorizations (\ref{2.7}) or (\ref{2.8}) may be
detrimental to performance of the parallel implementations of the
respective factorizations, so their non-pivoted counterparts have to
be used in those cases (with $P = I$).  Either way, a square pivot
factor $R_{\mathsf{p}\mathsf{q}}$ is obtained.  Note that the unitary
matrix $Q_{\mathsf{p}\mathsf{q}}$ in the QR factorization is not
needed for the rest of the Jacobi process, and it consequently does
not have to be computed.

Further processing of $R_{\mathsf{p}\mathsf{q}}$ is determined by a
variant of the Jacobi algorithm.  The following variants are
advisable: \emph{block-oriented\/} variant
(see~\cite{Hari-SingerSanja-SingerSasa-2010}), when the communication
(or memory access) overhead between the tasks is negligible compared
to the computational costs, and \emph{full block\/} variant
(see~\cite{Hari-SingerSanja-SingerSasa-2014}), otherwise.

In both variants, $R_{\mathsf{p}\mathsf{q}}$ is processed by an inner
one-sided Jacobi method.  In the block-oriented variant, exactly one
(quasi-)sweep of the inner (quasi-)cyclic\footnote{See Section
  \ref{sec:3} for the relevant definitions.} Jacobi method is allowed.
Therefore, $R_{\mathsf{p}\mathsf{q}}$ is transformed to
$R_{\mathsf{p}\mathsf{q}}' = R_{\mathsf{p}\mathsf{q}}^{} \widetilde{V}_{\mathsf{p}\mathsf{q}}^{}$,
with $\widetilde{V}_{\mathsf{p}\mathsf{q}}$ being a product of the
rotations applied in the (quasi-)sweep.  In the full block variant,
the inner Jacobi method computes the SVD of
$R_{\mathsf{p}\mathsf{q}}$, i.e.,
$R_{\mathsf{p}\mathsf{q}} V_{\mathsf{p}\mathsf{q}} = U_{\mathsf{p}\mathsf{q}} \Sigma_{\mathsf{p}\mathsf{q}}$.
By $V_{\mathsf{p}\mathsf{q}}'$ we denote the transformation matrix,
either $\widetilde{V}_{\mathsf{p}\mathsf{q}}^{}$ from the former, or
$V_{\mathsf{p}\mathsf{q}}$ from the latter variant.

Especially for the full block variant, the width of the block-columns
should be chosen such that $R_{\mathsf{p}\mathsf{q}}$ and
$V_{\mathsf{p}\mathsf{q}}'$ jointly saturate, without being evicted
from, the fast local memory (e.g., the private caches) of a processing
unit to which the block-columns
$\begin{bmatrix}G_{\mathsf{p}} & G_{\mathsf{q}}\end{bmatrix}$ are
assigned.  This also allows efficient blocking of the matrix
computations in (\ref{2.7}) (or (\ref{2.8})) and (\ref{2.9}), as
illustrated in Subsections \ref{sec:4.1} and \ref{sec:4.4}.

Having computed $V_{\mathsf{p}\mathsf{q}}'$, the block-columns of $G$
(and, optionally, $V$) are updated,
\begin{equation}
  \begin{bmatrix}
    G_{\mathsf{p}}' & G_{\mathsf{q}}'
  \end{bmatrix}
  = \begin{bmatrix}
    G_{\mathsf{p}}^{} & G_{\mathsf{q}}^{}
  \end{bmatrix}
  V_{\mathsf{p}\mathsf{q}}',\quad
  \begin{bmatrix}
    V_{\mathsf{p}}' & V_{\mathsf{q}}'
  \end{bmatrix}
  = \begin{bmatrix}
    V_{\mathsf{p}}^{} & V_{\mathsf{q}}^{}
  \end{bmatrix}
  V_{\mathsf{p}\mathsf{q}}'.
\label{2.9}
\end{equation}
The tasks processing disjoint pairs of block-columns may compute
concurrently with respect to each other, up to the local completions
of updates (\ref{2.9}).  A task then replaces (at least) one of its
updated block-columns of $G$ by (at least) one updated block-column of
$G$ from another task(s).  Optionally, the same replacement pattern is
repeated for the corresponding updated block-column(s) of $V$.  The
block-column replacements entail a synchronization of the tasks.  The
replacements are performed by communication or, on shared-memory
systems, by assigning a new pivot block-pair to each of the tasks.

The inner Jacobi method of both variants may itself be blocked, i.e.,
may divide $R_{\mathsf{p}\mathsf{q}}$ into block-columns of an
appropriate width for the next (usually faster but smaller) memory
hierarchy level.  This recursive blocking principle terminates at the
pointwise (non-blocked) Jacobi method, when no advantages in
performance could be gained by further blocking.  In that way a
hierarchical (or multi-level) blocking algorithm is created, with each
blocking level corresponding to a distinct communication or memory
domain
(see~\cite{SingerSanja-SingerSasa-Novakovic-Davidovic-Bokulic-Uscumlic-2012}).

For example, in the case of a multi-GPU system, we identify access to
the global memory (RAM) of a GPU as slow compared to the shared memory
and register access, and data exchange with another GPU as slow
compared to access to the local RAM\@.  This suggests the two-level
blocking for a single-GPU algorithm, and the three-level for a
multi-GPU one.

The inner Jacobi method, whether blocked or not, may be sequential or
parallel.  Both a single-GPU and a multi-GPU algorithm are examples of
a nested parallelism.

Similar ideas hold also for the hyperbolic SVD (HSVD)\@.  If
$G \in \mathbb{F}^{m \times n}$, $m \geq n$, and
$\rank(G) = \rank(G^{} J^{} G^{\ast})$, where $J = \diag(\pm 1)$, then
the HSVD of $G$ is (see~\cite{Onn-Steinhardt-Bojanczyk-1991,Zha-96})
\begin{equation}
  G = U \begin{bmatrix}
    \Sigma \\
    0
  \end{bmatrix}
  V^{\ast}, \quad \Sigma = \diag(\sigma_1, \ldots, \sigma_n), \quad
  \sigma_1 \geq \sigma_2 \geq \cdots \sigma_n \geq 0.
\label{2.10}
\end{equation}
Here, $U$ is a unitary matrix of order $m$, while $V$ is $J$-unitary,
(i.e., $V^{\ast} J V = J$) of order $n$.  The HSVD in (\ref{2.10}) can
be computed by orthogonalization of, either the of columns $G^{\ast}$
by trigonometric rotations~\cite{Dopico-Koev-Molera-2009}, or the
columns of $G$ by hyperbolic rotations~\cite{Veselic-93}.

A diagonalization method for the symmetric definite (or indefinite)
matrices requires only the partial SVD (or HSVD), i.e., the matrix $V$
is not needed.  With the former algorithm, the eigenvector matrix $U$
should be accumulated, but with the latter, it is easily obtainable by
scaling the columns of the final $G$.  Thus, the hyperbolic algorithm
is advantageous for the eigenproblem applications, as shown
in~\cite{SingerSanja-SingerSasa-Novakovic-Uscumlic-Dunjko-2012}.

In the sequel we assume that $\mathbb{F}=\mathbb{R}$, but everything,
save the computation of the Jacobi rotations and the hardware-imposed
block sizes, remains also valid for $\mathbb{F}=\mathbb{C}$.
%
%
\section{Parallel pivot strategies}\label{sec:3}
%
%
In each step of the classical, two-sided Jacobi (eigenvalue)
algorithm, the pivot strategy seeks and annihilates an off-diagonal
element $h_{pq}$ with the largest magnitude.  This approach has been
generalized for the parallel two-sided block-Jacobi
methods~\cite{Becka-Oksa-Vajtersic-2002}.  However, the one-sided
Jacobi algorithms would suffer from a prohibitive overhead of forming
and searching through the elements of $H$.  In the parallel algorithm
there is an additional problem of finding $\lfloor n/2 \rfloor$
off-diagonal elements with large magnitudes, that can be
simultaneously annihilated.  Therefore, a cyclic pivot strategy---a
repetitive, fixed order of annihilation of all off-diagonal elements
of $H$---is more appropriate for the one-sided algorithms.

\looseness=-1
More precisely, let $\mathsf{P}_n$ be the set
$\{(i,j) \mid 1 \leq i < j \leq n\}$ of all pivot pairs, i.e., pairs
of indices of the elements in the strictly upper triangle of a matrix
of order $n$, and let $\tau = |\mathsf{P}_n|$ be the cardinality of
$\mathsf{P}_n$.  Obviously, $\tau = n(n-1)/2$.
A \emph{pivot strategy\/} is a function
$\mathcal{P}_n\colon\mathbb{N} \to \mathsf{P}_n$, that associates with
each step $k \geq 1$ a pivot pair $(p(k), q(k))$.

If $\mathcal{P}_n$ is a periodic function, with the fundamental
period $\upsilon$, then, for all $i \geq 1$, the pivot sequences
$\mathsf{C}_i(\upsilon) = (\mathcal{P}_n(k) \mid (i-1)\upsilon+1 \leq k \leq i\upsilon)$,
of length $\upsilon$, are identical.  Consider a case where such a
sequence contains all the pivot pairs from $\mathsf{P}_n$.  Then, if
$\upsilon = \tau$, $\mathcal{P}_n$ is called a \emph{cyclic\/}
strategy and $\mathsf{C}_i(\upsilon)$ is its $i$-th \emph{sweep\/}.
Otherwise, if $\upsilon \geq \tau$, $\mathcal{P}_n$ is called a
\emph{quasi-cyclic\/} strategy and $\mathsf{C}_i(\upsilon)$ is its
$i$-th \emph{quasi-sweep\/}.

A Jacobi method is called (quasi-)cyclic if its pivot strategy is
(quasi-)cyclic.  In the (quasi-)cyclic method the pivot pair therefore
runs through all elements of $\mathsf{P}_n$ exactly (at least) once in
a (quasi-)sweep, and repeats the same sequence until the convergence
criteria are met.

We refer the reader to the standard terminology of equivalent,
shift-equivalent and weakly equivalent
strategies~\cite{Shroff-Schreiber-89}.  In the sequel, we identify a
(quasi-)cyclic pivot strategy with its first (quasi-)sweep, to
facilitate applications of the existing results for finite sequences
to the infinite but periodic ones.

A cyclic Jacobi strategy is \emph{perfectly parallel\/} (p-strategy)
if it allows simultaneous annihilation of as many elements of $H$ as
possible. More precisely, let
\begin{equation}
  t = \left\lfloor \frac{n}{2} \right\rfloor, \quad
  s =
  \begin{cases}
    n-1, & \text{$n$ even,}\\
    n,   & \text{$n$ odd,}
  \end{cases}
\label{3.1}
\end{equation}
then exactly $t$ disjoint pivot pairs can be simultaneously processed
in each of the $s$ parallel steps (p-steps).  As the p-strategies for
an even $n$ admit more parallelism within a p-step, i.e., one parallel
task more than the p-strategies for $n-1$, with the same number of
p-steps in both cases, in the sequel we assume $n$ to be even.

We now provide a definition of a p-strategy \emph{closest\/} to a
given sequential strategy.  The motivation was to explore whether a
heuristic based on such a notion could prove valuable in producing
fast p-strategies from the well-known row- and column-cyclic
sequential strategies.  The numerical testing (see
Section~\ref{sec:6}) strongly supports an affirmative answer.

\looseness=-1
Let $\mathcal{O}$ defines a cyclic pivot strategy.  Then, for each
pivot pair $(i, j) \in \mathsf{P}_n$ there exists an integer $k$ such
that $(i, j) = (p(k), q(k))$, where $(p(k), q(k)) \in \mathcal{O}$.
For any cyclic strategy $\mathcal{O}'$, and for each
$(p'(k), q'(k)) \in \mathcal{O}'$, there is
$(p(\ell(k)), q(\ell(k))) \in \mathcal{O}$, such that
\begin{equation}
  (p'(k), q'(k)) = (p(\ell(k)), q(\ell(k))).
\label{3.2}
\end{equation}
For $1 \leq k \leq \tau$, the values $\ell(k)$ are all distinct, and
lie between $1$ and $\tau$, inclusive.  For a fixed strategy
$\mathcal{O}$, this induces a one-to-one mapping $I_{\mathcal{O}}$,
from the set of all cyclic strategies on matrices of order $n$ to the
symmetric group $\Sym(\tau)$, as
\begin{displaymath}
  I_{\mathcal{O}}(\mathcal{O}') = (\ell(1), \ell(2), \ldots, \ell(k), \ldots, \ell(\tau)) \in \Sym(\tau),
\end{displaymath}
with $\ell(k)$ defined as in (\ref{3.2}).

\begin{definition}
  For any two cyclic strategies, $\mathcal{O}_1$ and $\mathcal{O}_2$,
  we say that $\mathcal{O}_1$ is \emph{closer\/} to $\mathcal{O}$ than
  $\mathcal{O}_2$, and denote that by
  $\mathcal{O}_1\preceq_{\mathcal{O}}\mathcal{O}_2$, if
  $I_{\mathcal{O}}(\mathcal{O}_1) \preceq I_{\mathcal{O}}(\mathcal{O}_2)$,
  where $\preceq$ stands for the lexicographic ordering of
  permutations.
\label{def:3.1}
\end{definition}

The relation ``strictly closer to $\mathcal{O}$'', denoted by
$\prec_{\mathcal{O}}$, is defined similarly.  Note that
$\preceq_{\mathcal{O}}$ is a total order on the finite set of all
cyclic strategies with a fixed $n$, and therefore, each non-empty
subset (e.g., a subset of all p-strategies) has a least element.  Now,
take $\mathcal{O} \in \{\mathcal{R}_n, \mathcal{C}_n\}$, where
$\mathcal{R}_n$ and $\mathcal{C}_n$ are the row-cyclic and the
column-cyclic strategies, respectively.  Then there exists a unique
p-strategy $\mathcal{R}_n^{\parallel}$
(resp.\ $\mathcal{C}_n^{\parallel}$) that is closest to
$\mathcal{R}_n$ (resp.\ $\mathcal{C}_n$).

Interpreted in the graph-theoretical setting, a task of finding the
closest p-strategy amounts to a recursive application of an algorithm
for generating all maximal independent sets (MIS) in lexicographic
order (e.g.,~\cite{JohnsonD-Yannakakis-Papadimitriou-88}).  Let
$\mathsf{G}$ be a simple graph with the vertices enumerated from $1$
to $\tau$, representing pivot pairs from a prescribed cyclic strategy
$\mathcal{O}_n$, and the edges denoting that two pivot pairs collide
(share an index).  Note that $|\text{MIS}(\mathsf{G})| \leq t$, where
$t$ is defined by~(\ref{3.1}).  Then a $\text{MIS}(\mathsf{G})$ with
$t$ vertices is an admissible p-step, and vice versa.  The same holds
for the graph $\mathsf{G}' = \mathsf{G} \setminus S$, where $S$ is any
admissible p-step.

Since any permutation of pivot pairs in a p-step generates an
equivalent (called step-equivalent) p-strategy, the vertices in each
MIS can be assumed to be sorted in ascending order.  With a routine
\texttt{next\_lex}, returning the lexicographically next
$\mathrm{MIS}$ with $t$ vertices (or $\emptyset$ if no such sets are
left), Alg.~\ref{alg:3.1} always produces
$\mathcal{O}_n^{\parallel}$, the p-strategy closest to
$\mathcal{O}_n$.  Note that, at the suitable recursion depths,
\texttt{next\_lex} could prepare further candidates in parallel with
the rest of the search, and parallel searches could also be launched
(or possibly canceled) on the waiting candidates.

\begin{algorithm}{\small
\SetKwInOut{KwDesc}{Description}
\SetKwRepeat{Loop}{begin loop}{end loop}
\SetKwFunction{genstrat}{gen\_strat}
\SetKwFunction{nextlex}{next\_lex}
\SetKwFunction{Append}{append}
\SetKwFunction{Remove}{remove}
\SetKw{InArg}{in}
\SetKw{Boolean}{boolean}
\SetKw{Return}{return}
\KwDesc{Input: the graph $\mathsf{G}$ induced by $\mathcal{O}_n$.
  Output: $\mathcal{O}_n^{\parallel}$ (initially $\emptyset$).}
\BlankLine
\Boolean\genstrat{$\InArg\ \mathsf{G}$}\;
\Begin{
    \lIf(\tcp*[f]{no more pivot pairs (success)})
        {$\mathsf{G}=\emptyset$}{\Return\textit{true\/}}
    \Loop
        {}
        {$S\leftarrow\nextlex{$\mathsf{G}$}$\tcp*{take a lexicographically next MIS\ldots}
        \lIf(\tcp*[f]{\ldots but there are none; fail})
        {$S=\emptyset$}{\Return\textit{false\/}}
        \Append $S$ to $\mathcal{O}_n^{\parallel}$\tcp*{\ldots else, $S$ is a new p-step candidate}
        \lIf(\tcp*[f]{try recursively\ldots})
        {$\genstrat{$\mathsf{G}\setminus S$}$}{\Return\textit{true\/}}
        \Remove $S$ from the back of $\mathcal{O}_n^{\parallel}$\tcp*{\ldots and backtrack if failed}
        }
  }}%
\caption{MIS-based generation of the p-strategy
  $\mathcal{O}_n^{\parallel}$ closest to $\mathcal{O}_n$.}
\label{alg:3.1}
\end{algorithm}

Alg.~\ref{alg:3.1}, however optimized, might still not be
feasible even for the off-line strategy generation, with $n$
sufficiently large.  However, there are two remedies: first, no large
sizes are needed due to the multi-level blocking; and second, we show
in the sequel that it might suffice to generate
$\mathcal{R}_n^{\parallel}$ (or $\mathcal{C}_n^{\parallel}$) only for
$n=2o$, with $o$ odd.

\begin{lemma}
  For all $n$, the sequence of pivot pairs
  $S_n^{(1)} = ((2k-1, 2k) \mid 1 \leq k \leq n/2)$
  is the first p-step of $\mathcal{R}_n^{\parallel}$ and
  $\mathcal{C}_n^{\parallel}$.
\label{lem:3.2}
\end{lemma}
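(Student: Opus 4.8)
Recall that $\mathcal{R}_n^{\parallel}$ (resp.\ $\mathcal{C}_n^{\parallel}$) is the least element, with respect to $\preceq_{\mathcal{O}}$ for $\mathcal{O} = \mathcal{R}_n$ (resp.\ $\mathcal{C}_n$), among all p-strategies of order $n$. Since $\preceq_{\mathcal{O}}$ is induced by the lexicographic order on $I_{\mathcal{O}}(\cdot) \in \Sym(\tau)$, to prove that $S_n^{(1)}$ is the first p-step it suffices to show two things: first, that $S_n^{(1)}$ \emph{is} an admissible first p-step, i.e.\ a set of $t = n/2$ pairwise disjoint pivot pairs; and second, that no admissible first p-step yields a lexicographically smaller prefix of $\ell$-values. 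The first point is immediate: the pairs $(1,2), (3,4), \dots, (n-1,n)$ are obviously disjoint and there are exactly $n/2$ of them, which is the maximum possible.

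**The plan.** The plan is to exploit the fact that both $\mathcal{R}_n$ and $\mathcal{C}_n$ begin with the same pivot pair, namely $(1,2)$: in row-cyclic order the first sweep starts $(1,2),(1,3),\dots$, and in column-cyclic order it starts $(1,2),(1,3),\dots$ as well (the first nontrivial column is the second one, whose only above-diagonal entry initially in play is $(1,2)$). Hence for $\mathcal{O} \in \{\mathcal{R}_n, \mathcal{C}_n\}$ we have that $(1,2)$ is the pair with $k=1$, i.e.\ the smallest $\ell$-value any pair can carry is $1$, and only the pair $(1,2)$ carries it. Therefore the lexicographically smallest possible first p-step must contain $(1,2)$ as its first (smallest-indexed) member; the closest p-strategy is forced to place $(1,2)$ first. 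Having fixed $(1,2)$, I would then argue inductively on the position within the p-step: among the pairs disjoint from $(1,2)$, the one with the smallest $\ell$-value under $\mathcal{O}$ is $(3,4)$ — because in both the row-cyclic and column-cyclic orderings, after $(1,2)$ the earliest pair that avoids indices $1$ and $2$ is $(3,4)$ (all earlier pairs $(1,j)$ or $(2,j)$ collide). Continuing, once $(1,2),(3,4),\dots,(2i-1,2i)$ are chosen, the smallest-$\ell$ pair disjoint from all of them is $(2i+1,2i+2)$, by the same reasoning about where the first "fresh" pair appears in row/column order.

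**Key steps, in order.** (1) Verify $S_n^{(1)}$ is a valid p-step of size $t$. (2) Record that both $\mathcal{R}_n$ and $\mathcal{C}_n$ have $\mathcal{P}_n(1) = (1,2)$, so the minimality of the $I_{\mathcal{O}}$-image forces $(1,2)$ into the first p-step and, since p-steps are taken sorted ascending, forces it to be the first entry. (3) Establish the key combinatorial fact: for $\mathcal{O} \in \{\mathcal{R}_n, \mathcal{C}_n\}$ and any prefix $(1,2),(3,4),\dots,(2i-1,2i)$ of already-selected disjoint pairs, the $\preceq_{\mathcal{O}}$-smallest pivot pair disjoint from all of them is exactly $(2i+1,2i+2)$. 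This is where I would do the (short) bookkeeping: in row-cyclic order the pairs appear as $(1,2),(1,3),\dots,(1,n),(2,3),\dots,(2,n),(3,4),\dots$; the first one avoiding $\{1,\dots,2i\}$ is $(2i+1,2i+2)$. In column-cyclic order the pairs appear as $(1,2),(1,3),(2,3),(1,4),(2,4),(3,4),\dots$; again the first one avoiding $\{1,\dots,2i\}$ is $(2i+1,2i+2)$. (4) Conclude by a greedy/exchange argument: a lexicographically minimal admissible first p-step, written in ascending order of its entries, must agree entry-by-entry with the greedy choice, hence equals $S_n^{(1)}$; since every p-strategy closest to $\mathcal{O}$ has $I_{\mathcal{O}}$-image lexicographically minimal, its first p-step is $S_n^{(1)}$.

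**The main obstacle.** The only real subtlety is step (4): I must be careful that minimizing the full permutation $I_{\mathcal{O}}(\mathcal{O}^{\parallel})$ lexicographically really does minimize its first $t$ entries in the "sorted ascending within a p-step" representation, and that a greedy choice of successive minimal-$\ell$ disjoint pairs is actually \emph{feasible} — i.e.\ that after greedily choosing $(1,2),(3,4),\dots,(2i-1,2i)$ one can always complete to a full p-step of size $t$ (here trivially yes, since the completion $(2i+1,2i+2),\dots,(n-1,n)$ always works), so the greedy choice is never "blocked" and does coincide with the true lexicographic minimum. Once feasibility-of-completion is noted, the exchange argument is routine. A secondary, purely notational point to handle cleanly is the exact form of the column-cyclic enumeration, so that the claim "the first pair avoiding $\{1,\dots,2i\}$ is $(2i+1,2i+2)$" is visibly correct in that ordering as well as in the row-cyclic one.
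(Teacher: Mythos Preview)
Your proposal is correct and follows essentially the same greedy/inductive argument as the paper: both show that in $\mathcal{R}_n$ and $\mathcal{C}_n$ the first pair is $(1,2)$, and that after removing all pairs meeting $\{1,\dots,2i\}$ the earliest survivor is $(2i+1,2i+2)$. One small point to tighten in your step~(4): feasibility of completing the \emph{p-step} is trivial, but what the lex-minimality argument actually needs is that $S_n^{(1)}$ can be extended to a full \emph{p-strategy} (all $s$ p-steps covering $\mathsf{P}_n$); the paper handles this in one line by noting that the Brent--Luk strategy starts with $S_n^{(1)}$, and you should do the same.
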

\begin{proof}
  Note that $S_n^{(1)}$ is an admissible p-step, i.e., there exists a
  p-strategy having $S_n^{(1)}$ as one of its p-steps.  For example,
  the Brent and Luk strategy starts with it.

  The first pivot pair in $\mathcal{R}_n$ and $\mathcal{C}_n$ is
  $(1, 2)$, i.e., $(2k-1, 2k)$ for $k = 1$.  If all pivot pairs in
  $\mathcal{R}_n$ or $\mathcal{C}_n$ containing indices $1$ or $2$ are
  removed, the first pivot pair in the remaining sequence is $(3, 4)$,
  i.e., $(2k-1, 2k)$ for $k = 2$.  Inductively, after selecting the
  pivot pair $(2\ell-1, 2\ell)$, with $\ell < n/2$, and removing all
  pivot pairs that contain $2k-1$ or $2k$, for all
  $1 \leq k \leq \ell$, the first remaining pivot pair is
  $(2\ell'-1, 2\ell')$ for $\ell' = \ell+1$.
\qquad\end{proof}

A matrix of order $2n$ can be regarded at the same time as a block
matrix of order $n$ with $2 \times 2$ blocks (see Fig.~\ref{fig:3.1}).
As a consequence of Lemma~\ref{lem:3.2}, after the first p-step of
either $\mathcal{R}_{2n}^{\parallel}$ or $\mathcal{C}_{2n}^{\parallel}$
(i.e., $S_{2n}^{(1)}$), the diagonal $2 \times 2$ blocks are
diagonalized, and the off-diagonal blocks are yet to be annihilated.

Once we have the diagonal blocks diagonalized, it is easy to construct
the closest block p-strategy
$\widetilde{\mathcal{O}}_{2n}^{\parallel}$ from
$\mathcal{O}_n^{\parallel}$, since each pivot pair of
$\mathcal{O}_n^{\parallel}$ corresponds uniquely to an off-diagonal
$2 \times 2$ block.  A p-step of $\mathcal{O}_n^{\parallel}$ is
expanded to two successive p-steps of
$\widetilde{\mathcal{O}}_{2n}^{\parallel}$.  The expansion procedure
is given by Alg.~\ref{alg:3.2}, for
$\mathcal{O}_n \in \{\mathcal{R}_n, \mathcal{C}_n\}$, and illustrated,
for $n = 6$ and $\mathcal{O}_n = \mathcal{R}_n$, with
Fig.~\ref{fig:3.1}.  Note that a pivot pair of $\mathcal{O}_n^{\parallel}$
contributes two pairs, $(\text{\textsc{nw}}, \text{\textsc{se}})$ and
either $(\text{\textsc{ne}}, \text{\textsc{sw}})$ or
$(\text{\textsc{sw}}, \text{\textsc{ne}})$, of non-colliding
and locally closest pivot pairs in its corresponding block.

\begin{algorithm}{\small
\SetKwInOut{KwDesc}{Description}
\SetKwFunction{Append}{append}
\SetKwFunction{Even}{even}
\KwDesc{Input: $\mathcal{O}_n^{\parallel}$,
  $\mathcal{O}_n\in\{\mathcal{R}_n,\mathcal{C}_n\}$.
  Output: $\widetilde{\mathcal{O}}_{2n}^{\parallel}$.\\
  $S_n^{(i)}$ is the $i$-th p-step of $\mathcal{O}_n^{\parallel}$, and
  $S_{2n}^{(i)}$ is the $i$-th p-step of
  $\widetilde{\mathcal{O}}_{2n}^{\parallel}$.}
\BlankLine
$S_{2n}^{(1)} \leftarrow ((2k-1, 2k) \mid 1 \leq k \leq n)$\;
\For(\tcp*[f]{construct $S_{2n}^{(i)}$})
  {$i\leftarrow 2$ \KwTo $2n-1$}
  {$S_{2n}^{(i)}=\emptyset$\;
  \ForEach{$(p, q)\in S_n^{(i \bdiv 2)}$}
    {\eIf{\Even{$i$}}
      {$\text{\textsc{nw}}=(2p-1, 2q-1);\quad\text{\textsc{se}}=(2p, 2q);$\quad
       \Append $(\text{\textsc{nw}}, \text{\textsc{se}})$ to $S_{2n}^{(i)}$\;}
      {$\text{\textsc{ne}}=(2p-1, 2q);\quad\text{\textsc{sw}}=(2p, 2q-1)$\;
       \leIf{$\mathcal{O}_n=\mathcal{R}_n$}
         {\Append $(\text{\textsc{ne}}, \text{\textsc{sw}})$ to $S_{2n}^{(i)}$}
         {\Append $(\text{\textsc{sw}}, \text{\textsc{ne}})$ to $S_{2n}^{(i)}$}
    }
  }
}}%
\caption{Expansion of $\mathcal{O}_n^{\parallel}$ to
  $\widetilde{\mathcal{O}}_{2n}^{\parallel}$ for
  $\mathcal{O}_n\in\{\mathcal{R}_n,\mathcal{C}_n\}$.}
\label{alg:3.2}
\end{algorithm}

\begin{figure}[hbt]
  \begin{center}
    \includegraphics{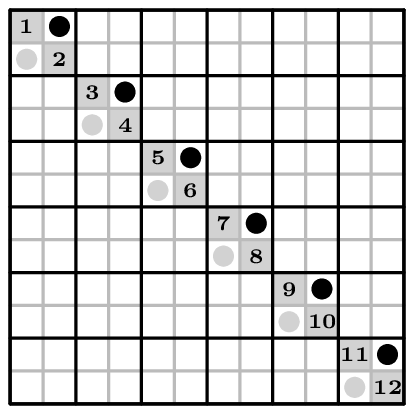}\hfill
    \includegraphics{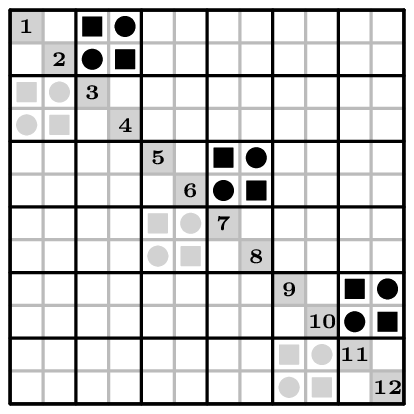}\hfill
    \includegraphics{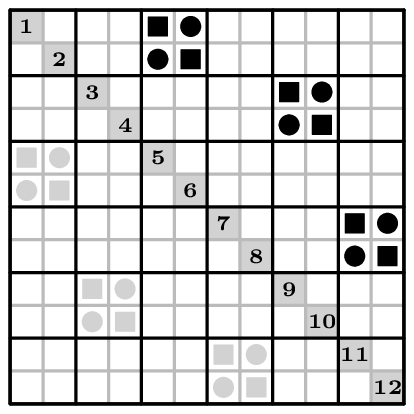}\\[9pt]
    \includegraphics{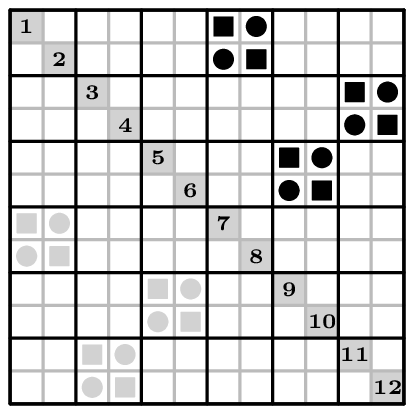}\hfill
    \includegraphics{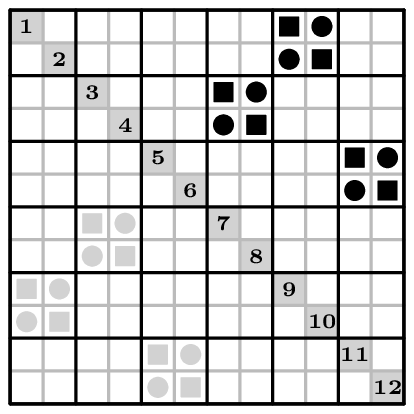}\hfill
    \includegraphics{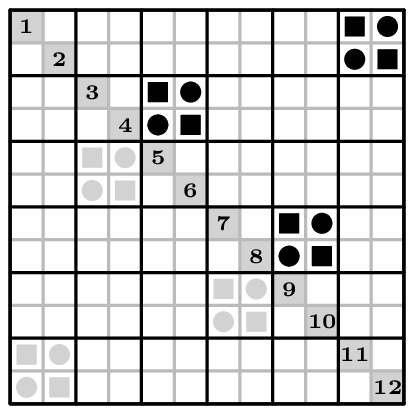}
  \end{center}
  \caption{Expansion of $\mathcal{R}_6^{\parallel}$ to
    $\mathcal{R}_{12}^{\parallel}$, according to
    Alg.~\ref{alg:3.2}.  From left to right: the black disks
    represent the odd p-steps, while the black squares stand for the
    even p-steps.}
\label{fig:3.1}
\end{figure}

It's trivial to show that, with $\mathcal{O}_n^{\parallel}$ given, the
p-strategy $\widetilde{\mathcal{O}}_{2n}^{\parallel}$ generated by
Alg.~\ref{alg:3.2} is indeed the closest \emph{block\/} p-strategy;
any other such
$\mathcal{S}_{2n}^{\parallel} \prec_{\mathcal{O}} \widetilde{\mathcal{O}}_{2n}^{\parallel}$
would induce, by the block-to-pivot correspondence, a strategy
$\mathcal{S}_n^{\parallel} \prec_{\mathcal{O}} \mathcal{O}_n^{\parallel}$,
which is impossible.  Moreover, we have verified that, for $n \leq 18$
and both $\mathcal{R}_n$ and $\mathcal{C}_n$ strategies,
$\widetilde{\mathcal{O}}_{2n}^{\parallel} = \mathcal{O}_{2n}^{\parallel}$,
and although lacking a rigorous proof, claim that the same holds for
all even $n$.  Therefore, as a tentative corrolary, to construct
$\mathcal{O}_m^{\parallel}$, for
$\mathcal{O}_m\in\{\mathcal{R}_m,\mathcal{C}_m\}$ and $m = 2^k o$,
with $k > 1$ and $o$ odd, it would suffice to construct
$\mathcal{O}_n^{\parallel}$, $n = 2o$, and apply, $k-1$ times,
Alg.~\ref{alg:3.2}.

For example, a three-level blocking algorithm for $4$ GPUs and a
matrix of order $15\cdot 1024$ requires $\mathcal{O}_8^{\parallel}$,
$\mathcal{O}_{240}^{\parallel}$, and $\mathcal{O}_{32}^{\parallel}$
strategies.  To find $\mathcal{O}_{240}^{\parallel}$, it suffices to
construct $\mathcal{O}_{30}^{\parallel}$, and expand (i.e., duplicate)
it $3$ times, since $240 = 2^3 \cdot 2 \cdot 15$.  Thus, the
$\mathcal{O}_m^{\parallel}$ strategies should be pretabulated once,
for the small, computationally feasible orders $m$, and stored into a
code library for future use.  The expansion procedure can be performed
at run-time, when the size of input is known.

The strategies just described progress from the diagonal of a matrix
outwards.  However, if the magnitudes of the off-diagonal elements in
the final sweeps of the two-sided Jacobi method are depicted, a
typical picture~\cite[page 1349]{Drmac-Veselic-2008a} shows that the
magnitudes rise towards the ridge on the diagonal.  That motivated us
to explore whether a faster decay of the off-diagonal elements far
away from the diagonal could be reached by annihilating them first,
and the near-diagonal elements last.  This change of the annihilation
order is easily done by reverting the order of pivot pairs in a sweep
of $\widetilde{\mathcal{R}}_n^{\parallel}$ and
$\widetilde{\mathcal{C}}_n^{\parallel}$.  Formally, a \emph{reverse\/}
of the strategy $\mathcal{O}_n$ is the strategy\footnote{It is also
  common to denote the reverse of $\mathcal{O}$ by
  $\mathcal{O}^{\leftarrow}$ or $\mathcal{O}_{\leftarrow}$.}
$\text{\reflectbox{$\mathcal{O}$}}_n$, given by
\begin{displaymath}
  \text{\reflectbox{$\mathcal{O}$}}_n \assgn
  ((p(\tau-k+1), q(\tau-k+1)) \mid 1 \leq k \leq \tau),
\end{displaymath}
where $\mathcal{O}_n = ((p(k), q(k)) \mid 1 \leq k \leq \tau)$.  Thus,
$\text{\reflectbox{$\mathcal{R}$}}_n^{\parallel}$ and
$\text{\reflectbox{$\mathcal{C}$}}_n^{\parallel}$ progress inwards,
ending with $S_n^{(1)}$ reversed.  We tentatively denote the reverses
of both $\mathcal{R}_n^{\parallel}$ and
$\widetilde{\mathcal{R}}_n^{\parallel}$
(resp.~$\mathcal{C}_n^{\parallel}$ and
$\widetilde{\mathcal{C}}_n^{\parallel}$) by the same symbol.

For $m=2^k$, both $\mathcal{R}_m^{\parallel}$ and
$\mathcal{C}_m^{\parallel}$ can be generated efficiently by
Alg.~\ref{alg:3.1}, since no backtracking occurs.  In this special
case it holds that
$\mathcal{R}_m^{\parallel} = \widetilde{\mathcal{R}}_m^{\parallel}$,
$\mathcal{C}_m^{\parallel} = \widetilde{\mathcal{C}}_m^{\parallel}$, and
$\mathcal{R}_m^{\parallel}$ is step-equivalent to
$\mathcal{C}_m^{\parallel}$.  The former claims are verified for
$k\leq 14$.

The respective reverses,
$\text{\reflectbox{$\mathcal{R}$}}_m^{\parallel}$ and
$\text{\reflectbox{$\mathcal{C}$}}_m^{\parallel}$, operate in the same
block-recursive fashion (preserved by Alg.~\ref{alg:3.2}) of the
Mantharam--Eberlein BR strategy~\cite{Mantharam-Eberlein-93}, i.e.,
processing first the off-diagonal block, and then simultaneously the
diagonal blocks of a matrix.  It follows that all three strategies are
step-equivalent.  Thus,
$\text{\reflectbox{$\mathcal{R}$}}_n^{\parallel}$ and
$\text{\reflectbox{$\mathcal{C}$}}_n^{\parallel}$ can be regarded as
the generalizations of the BR strategy to an arbitrary even order $n$,
albeit lacking a simple communication pattern.  Conversely, for the
power-of-two orders, $\text{\reflectbox{$\mathcal{R}$}}_m^{\parallel}$
and $\text{\reflectbox{$\mathcal{C}$}}_m^{\parallel}$ might be
replaced by the BR strategy with a hypercube-based communication.
%
%
\section{A single-GPU algorithm}\label{sec:4}
%
%
In this section we describe the two-level blocked Jacobi (H)SVD
algorithm for a single GPU\@.  The algorithm is designed and
implemented with NVIDIA CUDA~\cite{NVidiaPG} technology, but is also
applicable to OpenCL, and---at least conceptually---to the other
massively parallel accelerator platforms.

We assume that the following CUDA operations are correctly rounded, as
per IEEE 754-2008 standard~\cite{IEEE-754-2008}: ${+}$, ${-}$, ${*}$,
${/}$, $\sqrt{x}$, $\FMA(x, y, z) = x \cdot y + z$, and $\RCP(x)=1/x$.
Under that assumption, the algorithm may work in any floating-point
precision available, but is tested in double precision only, on Fermi
and Kepler GPU architectures.

The algorithm performs all computation on a GPU, and consists of
$3$ kernels:
\begin{compactenum}
\item \texttt{initV} -- optional initialization of the matrix $V$ to
  $I_n$, if the full (H)SVD is requested (for the HSVD, $V^{-T} = J V
  J$ will be accumulated instead of $V$);
\item \texttt{pStep} -- invoked once for each p-step in a block-sweep;
\item \texttt{Sigma} -- a final singular value extraction
  ($\sigma_i = \| g'_i \|_2$).
\end{compactenum}
The CPU is responsible only for the main control flow, i.e., kernel
invocations and testing of the stopping criterion.  Besides a simple
statistics from each \texttt{pStep} call, there is no other CPU
$\leftrightarrow$ GPU data transfer.  We assume that the input factor
$G$ is preloaded onto a GPU\@.
We keep the matrix $J$ partitioned as $J = \diag(I, -I)$ and represent
it with a parameter $n_+$, where $n_+$ denotes the number of positive
signs in $J$.  The output data remaining on the GPU are
$G' = U \Sigma$ (overwrites $G$), $\Sigma$, and (optionally) $V$.

Data layout (i.e., array order) is column-major (as in Fortran), to be
compatible with (cu)BLAS and other numerical libraries, like MAGMA\@.
We write one-based array indices in parentheses, and zero-based ones
in square brackets.

The matrices $G$ and $V$ are partitioned into $\mathsf{b} = n / 16$
block-columns, as in~(\ref{2.6}).  To simplify the algorithm, $n$ must
be divisible by $16$ and $\mathsf{b}$ must be even.  Otherwise, if $n$
is not divisible by $32$, $G$ has to be bordered as
in~\cite[eq.~(4.1)]{Novakovic-SingerSanja-2011}.  The reason lies in
the hardware constraints on the GPU shared memory configurations and
a fixed warp size ($32$ threads), as explained in the sequel.

We focus on \texttt{pStep} kernel, since the other two are
straightforward.  An execution grid for \texttt{pStep} comprises
$\mathsf{b} / 2$ thread blocks, i.e., one thread block per a pivot
block-pair.  Each 2-dimensional thread block is assigned
$32 \times 16 = 512$ threads, and $16\rm\ kB$ of the GPU shared
memory.  That imposes a theoretical limit of $3$ thread blocks per
multiprocessor, $100\%$ occupancy on a Fermi, and $75\%$ occupancy on
a Kepler GPU.

A chosen block p-strategy $\mathcal{S}_{\mathsf{b}}$ is preloaded in the
constant or global GPU memory in a form of an $O(1)$ lookup table
$S_{\mathsf{b}}$.  A thread block $\mathsf{t}$ in a \texttt{pStep}
invocation $\mathsf{s}$ during a block-sweep $\mathsf{r}$ obtains from
$S_{\mathsf{b}}$ the indices $\mathsf{p}$ and $\mathsf{q}$,
$1 \leq \mathsf{p} < \mathsf{q} \leq \mathsf{b}$, of the block-columns
$\mathsf{t}$ is about to process.  In other words, a mapping
$(\mathsf{r}, \mathsf{s}, \mathsf{t}) \mapsto (\mathsf{p}, \mathsf{q}) \in \mathcal{S}_{\mathsf{b}}$
establishes a correspondence between the thread blocks and the pivot
block-pairs.

A thread block behavior is uniquely determined by the block indices
$\mathsf{p}$ and $\mathsf{q}$, since the thread blocks in every
\texttt{pStep} invocation are mutually independent.  Computation in a
thread block proceeds in the three major phases:
\begin{compactenum}
\item \texttt{factorize} -- shortens the pivot block-pair
  $\begin{bmatrix}G_{\mathsf{p}} & G_{\mathsf{q}}\end{bmatrix}$,
  according to (\ref{2.7}) or (\ref{2.8}), into the triangular factor
  $R_{\mathsf{p}\mathsf{q}}$ of order $32$, and initializes
  $V_{\mathsf{p}\mathsf{q}}' = I_{32}$;
\item \texttt{orthogonalize} -- orthogonalizes
  $R_{\mathsf{p}\mathsf{q}}$ by the inner pointwise Jacobi method,
  according to the block-oriented or the full block variant of the
  Jacobi (H)SVD algorithm (see Section \ref{sec:2}),
  accumulating the applied rotations into $V_{\mathsf{p}\mathsf{q}}'$;
\item \texttt{postmultiply} -- postmultiplies
  $\begin{bmatrix}G_{\mathsf{p}} & G_{\mathsf{q}}\end{bmatrix}$, and
  optionally
  $\begin{bmatrix}V_{\mathsf{p}} & V_{\mathsf{q}}\end{bmatrix}$, by
  $V_{\mathsf{p}\mathsf{q}}'$, according to (\ref{2.9}).
\end{compactenum}
The matrices $R_{\mathsf{p}\mathsf{q}}$ and
$V_{\mathsf{p}\mathsf{q}}'$ reside in the shared memory, and together
occupy $16\rm\ kB$.  The entire allocated shared memory may also be
regarded as a single $64 \times 32$ double precision matrix, named
$G_{\mathsf{p}\mathsf{q}}$, of which $R_{\mathsf{p}\mathsf{q}}$
aliases the lower, and $V_{\mathsf{p}\mathsf{q}}'$ the upper half.

There is no shared memory configuration that can hold two square
double precision matrices of order that is a larger multiple of the
warp size than $32$.  It is therefore optimal to use the smallest
shared memory configuration ($16\rm\ kB$), leaving the highest amount
($48\rm\ kB$) of the L1 cache available.  Also, since
$R_{\mathsf{p}\mathsf{q}}$ and $V_{\mathsf{p}\mathsf{q}}'$ have to be
preserved between the phases, all phases need to be completed in the
same kernel invocation.  That creates a heavy register pressure, which
is the sole reason why only one thread block (instead of $3$) can be
active on a multiprocessor.

In the complex case ($\mathbb{F} = \mathbb{C}$), the shared memory
configuration would be $48\rm\ kB$ (suboptimal, $16\rm\ kB$
unutilized) or $32\rm\ kB$, for a Fermi or a Kepler GPU,
respectively.

We present two approaches for \texttt{factorize}.  The Cholesky
factorization of the Gram matrix $H_{\mathsf{p}\mathsf{q}}$, as in
(\ref{2.7}), is described in Subsection~\ref{sec:4.1} in two
subphases, and the QR factorization (\ref{2.8}) is discussed in
Subsection~\ref{sec:4.2}.
%
%
\subsection{The Cholesky factorization}\label{sec:4.1}
%
%
The first subphase of \texttt{factorize} loads the successive
$64 \times 32$ chunks of
$\begin{bmatrix}G_{\mathsf{p}} & G_{\mathsf{q}}\end{bmatrix}$ into
$G_{\mathsf{p}\mathsf{q}}$.  For a thread with the Cartesian indices
$[x,y]$ in our thread block, $x$ is its lane ID, and $y$ its warp
ID\@.  Let $y' = y + 16$ onwards.  After a chunk is loaded, the thread
$[x,y]$ then updates $H_{\mathsf{p}\mathsf{q}}[x,y]$ and
$H_{\mathsf{p}\mathsf{q}}[x,y']$ (kept in its registers and being
initially $0$),
\begin{displaymath}
H_{\mathsf{p}\mathsf{q}}[x,y] \mathop{{+}{=}} G_{\mathsf{p}\mathsf{q}}[{:},x]^T G_{\mathsf{p}\mathsf{q}}[{:},y],\quad
H_{\mathsf{p}\mathsf{q}}[x,y'] \mathop{{+}{=}} G_{\mathsf{p}\mathsf{q}}[{:},x]^T G_{\mathsf{p}\mathsf{q}}[{:},y'].
\end{displaymath}

Finally, when all the chunks are processed, $H_{\mathsf{p}\mathsf{q}}$
is written into $R_{\mathsf{p}\mathsf{q}}$, and
$V_{\mathsf{p}\mathsf{q}}'$ is set to $I_{32}$.  Note that data in the
GPU RAM are accessed only once.  No symmetrization is needed for
$H_{\mathsf{p}\mathsf{q}}$, since only its lower triangle is taken as
an input for the Cholesky factorization.  For details of this subphase
see Alg.~\ref{alg:4.1}.

\begin{algorithm}{\small
\SetKwInOut{KwDesc}{Description}
\SetKwFunction{Sync}{\_\_syncthreads}
\SetKwFor{For}{for}{}{endfor}
\SetKwFor{uFor}{unrolled for}{}{endfor}
\KwDesc{Input: $\begin{bmatrix}G_{\mathsf{p}} & G_{\mathsf{q}}\end{bmatrix}$.
  Output: $H_{\mathsf{p}\mathsf{q}}$.  Thread ID: $[x,y]$.}
\BlankLine
$x' = x + 32;\quad y' = y + 16;\quad h_{xy} = h_{xy'} = 0$%
\tcp*{$H_{\mathsf{p}\mathsf{q}}$ elements kept in registers}
\For(\tcp*[f]{process the next chunk of $\begin{bmatrix}G_{\mathsf{p}} & G_{\mathsf{q}}\end{bmatrix}$})%
{$(i = x;\, i < n;\, i \mathop{{+}{=}} 64)$}%
{$G_{\mathsf{p}\mathsf{q}}[x,y] = G_{\mathsf{p}}[i,y];\quad
G_{\mathsf{p}\mathsf{q}}[x,y'] = G_{\mathsf{q}}[i,y]$%
\tcp*{load the first $32$ chunk rows}
\eIf(\tcp*[f]{load the remaining chunk rows})%
{$(i' = i + 32) < n$}%
{$G_{\mathsf{p}\mathsf{q}}[x',y] = G_{\mathsf{p}}[i',y];\quad
G_{\mathsf{p}\mathsf{q}}[x',y'] = G_{\mathsf{q}}[i',y]$\;}%
(\tcp*[f]{border $G_{\mathsf{p}\mathsf{q}}$ with zeros})%
{$G_{\mathsf{p}\mathsf{q}}[x',y] = 0;\quad
G_{\mathsf{p}\mathsf{q}}[x',y'] = 0$\;}
\Sync{}\tcp*{ensure the shared memory writes have taken effect}
\uFor(\tcp*[f]{compute the partial dot-products})%
{$(j = 0;\, j < 64;\, {+}{+}j)$}%
{$j' = (x + j) \bmod 64$\tcp*{modular row addressing avoids bank conflicts}
$g_{j'x} = G_{\mathsf{p}\mathsf{q}}[j',x];\quad
g_{j'y} = G_{\mathsf{p}\mathsf{q}}[j',y];\quad
g_{j'y'} = G_{\mathsf{p}\mathsf{q}}[j',y']$\;
$h_{xy} = \FMA(g_{j'x}, g_{j'y}, h_{xy})$\tcp*{update $H_{\mathsf{p}\mathsf{q}}[x,y]$}
$h_{xy'} = \FMA(g_{j'x}, g_{j'y'}, h_{xy'})$\tcp*{update $H_{\mathsf{p}\mathsf{q}}[x,y']$}
}%
\Sync{}\tcp*{ensure that $G_{\mathsf{p}\mathsf{q}}$ is free to be overwritten}
}%
$R_{\mathsf{p}\mathsf{q}}[x,y] = h_{xy};\quad
R_{\mathsf{p}\mathsf{q}}[x,y'] = h_{xy'}$%
\tcp*{store (unsymmetrized) $H_{\mathsf{p}\mathsf{q}}$ to $R_{\mathsf{p}\mathsf{q}}$}
\Sync{}\tcp*{ensure the shared memory writes have taken effect}
}%
\caption{Device function that computes the Gram matrix
  $H_{\mathsf{p}\mathsf{q}}$.}
\label{alg:4.1}
\end{algorithm}

On a Kepler GPU, with $8$ bytes wide shared memory banks, each thread
in a warp can access a different bank.  Due to column-major data
layout, each of $32$ consecutive (modulo $64$) rows of
$G_{\mathsf{p}\mathsf{q}}$ belongs to a separate bank.  Therefore, the
modular row addressing of Alg.~\ref{alg:4.1} guarantees
bank-conflict-free operation on a Kepler GPU, and generates $2$-way
bank conflicts on a Fermi GPU, with $4$ bytes wide banks.

The next subphase consists of the in-place, forward-looking Cholesky
factorization of $H_{\mathsf{p}\mathsf{q}}$ without pivoting, i.e.,
$H_{\mathsf{p}\mathsf{q}} = L_{\mathsf{p}\mathsf{q}} L_{\mathsf{p}\mathsf{q}}^T$.
The factorization proceeds columnwise to avoid bank conflicts.  After
the factorization, the upper triangle of $R_{\mathsf{p}\mathsf{q}}$ is
set to $L_{\mathsf{p}\mathsf{q}}^T$, and the strict lower triangle to
zero.  This transposition is the only part of the entire algorithm
that necessarily incurs the bank conflicts.  The factorization has
$32$ steps.  The step $k$, for $0 \leq k < 32$, transforms
$H_{\mathsf{p}\mathsf{q}}[k{:},k{:}]$ in $2$ or $3$ stages:
\begin{compactenum}
\item[(a)] Compute $L_{\mathsf{p}\mathsf{q}}[k{:},k]$, overwriting
  $H_{\mathsf{p}\mathsf{q}}[k{:},k]$ (see Fig.~\ref{fig:4.1}(a)).  Only
  one warp is active.  The thread $[x,y]$ performs the following
  operations:
  \begin{compactitem}
    \item If $x = y = k$, then
      $L_{\mathsf{p}\mathsf{q}}[k,k] = \sqrt{H_{\mathsf{p}\mathsf{q}}[k,k]}$;
    \item else, if $x > y = k$, then
      $L_{\mathsf{p}\mathsf{q}}[x,k] = H_{\mathsf{p}\mathsf{q}}[x,k] / \sqrt{H_{\mathsf{p}\mathsf{q}}[k,k]}$;%
\footnote{Could possibly be faster if implemented as
$L_{\mathsf{p}\mathsf{q}}[x,k] = H_{\mathsf{p}\mathsf{q}}[x,k] * \RSQRT(H_{\mathsf{p}\mathsf{q}}[k,k])$.}
    \item else, the thread is dormant, i.e., does nothing.
  \end{compactitem}
\item[(b)] Update at most $16$ subsequent columns of
  $H_{\mathsf{p}\mathsf{q}}$.  Let $j = (k + 1) + y$.  If $x \geq j$
  and $j < 32$, then
  $H_{\mathsf{p}\mathsf{q}}[x,j] = \FMA(-L_{\mathsf{p}\mathsf{q}}[x,k], L_{\mathsf{p}\mathsf{q}}[j,k], H_{\mathsf{p}\mathsf{q}}[x,j])$,
  else do nothing (see Fig.~\ref{fig:4.1}(b)).
\item[(c)] If there are more columns remaining, let $j' = (k + 1) + y'$.
  If $x \geq j'$ and $j' < 32$, then
  $H_{\mathsf{p}\mathsf{q}}[x,j'] = \FMA(-L_{\mathsf{p}\mathsf{q}}[x,k], L_{\mathsf{p}\mathsf{q}}[j',k], H_{\mathsf{p}\mathsf{q}}[x,j'])$,
  else do nothing (see Fig.~\ref{fig:4.1}(c)).
\end{compactenum}
After each stage, a thread-block-wide synchronization
(\texttt{\_\_syncthreads}) is necessary.

\begin{figure}[hbt]
\begin{center}
\includegraphics[width=\textwidth]{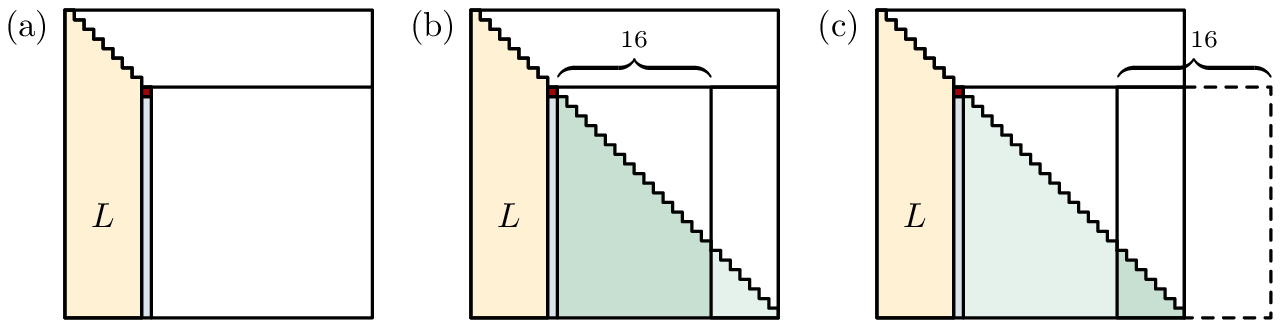}
\end{center}
\caption{The forward-looking Cholesky factorization
  $H_{\mathsf{p}\mathsf{q}} = L_{\mathsf{p}\mathsf{q}} L_{\mathsf{p}\mathsf{q}}^T$.}
\label{fig:4.1}
\end{figure}
%
%
\subsection{The QR factorization}\label{sec:4.2}
%
%
When the input matrix $G$ is badly scaled, the QR factorization
(\ref{2.8}) is required at all blocking levels, since the input
columns of too large (resp.~too small) norm could cause overflow
(resp.~underflow) while forming the Gram matrices.  If the QR
factorization employs the Householder reflectors, the column norm
computations should be carried out carefully, as detailed in
Appendix~\ref{sec:nrm}.

The tall-and-skinny in-GPU QR factorization is described
in~\cite{AndersonM-Ballard-Demmel-Keutzer-2011}.  It is applicable
when a single QR factorization per p-step is to be performed on a
GPU, e.g., in the shortening phase of a multi-GPU algorithm.  On the
shared memory blocking level, each thread block has to perform its own
QR factorization.  Therefore, an algorithm for the batched
tall-and-skinny QRs is needed in this case.

Ideally, such an algorithm should access the GPU RAM as few times as
possible, and be comparable in speed to the Cholesky factorization
approach.  We show that the algorithm can be made to access
$\begin{bmatrix}G_{\mathsf{p}} & G_{\mathsf{q}}\end{bmatrix}$ exactly
once, but the latter remains difficult to accomplish.

Let $A_0^{(i)}$ and $A_1^{(i)}$ be the $32 \times 32$ matrices that
alias the lower and the upper half of $G_{\mathsf{p}\mathsf{q}}$,
i.e., $R_{\mathsf{p}\mathsf{q}}$ and $V_{\mathsf{p}\mathsf{q}}'$,
respectively.  The first $32 \times 32$ chunk of
$\begin{bmatrix}G_{\mathsf{p}} & G_{\mathsf{q}}\end{bmatrix}$ is
loaded into $A_0^{(0)}$ and factorized as
$A_0^{(0)} = Q_0^{(0)} R_0^{(0)}$.  The factorization is performed by
$31$ successive applications of the Householder reflectors, in a
pattern similar to Fig.~\ref{fig:4.1}(b,c).  A reflector is
simultaneously computed in all active warps before the update, but is
not preserved, and $Q_0^{(0)}$ is not (explicitly or implicitly)
formed.

More precisely, for $0 \leq k < 31$, let $H_k$ be the reflector
annihilating the subdiagonal of the $k$-th column,
$H_k^{} = I_{32}^{} - \tau_k^{} w_k^{} w_k^T$, where
$w_k = \begin{bmatrix}\mathbf{0} & 1 & v_k\end{bmatrix}^T$
($\mathbf{0}$ is a vector of $k$ zeros).  In a thread with the row
index $x$, $H_k$ is represented by $\tau_k$ and $w_k[x]$.  When the
reflector is found, the warp $y$ transforms $A_0^{(0)}[\ell{:},\ell]$,
where $\ell = k + y$.  Let $z_{\ell}$ be the scalar product
$z_{\ell}^{} = w_{\ell}^T A_0^{(0)}[{:},\ell]$, computed by warp-level
shared memory reduction (on Fermi), or by warp shuffle reduction (on
Kepler).  Then, the update by $H_k$ is
\begin{displaymath}
  A_0^{(0)}[x,\ell]' = \FMA(-\tau_{\ell}^{} * z_{\ell}^{}, w_{\ell}^{}[x], A_0^{(0)}[x,\ell]).
\end{displaymath}
The transformation is then repeated for $\ell' = k + y'$.

After $R_0^{(0)}$ is formed, the second chunk of
$\begin{bmatrix}G_{\mathsf{p}} & G_{\mathsf{q}}\end{bmatrix}$ is
loaded into $A_1^{(0)}$ and similarly factored as
$A_1^{(0)} = Q_1^{(0)} R_1^{(0)}$.

The factors $R_0^{(0)}$ and $R_1^{(0)}$ are combined into $R_0^{(1)}$
by a ``peel-off'' procedure illustrated with Fig.~\ref{fig:4.2}.  The
procedure peels off one by one (super)diagonal of $R_1^{(0)}$ by the
independent Givens rotations, until (after $32$ stages) $R_1^{(0)}$ is
reduced to a zero matrix.  In the stage $k$, the row $x$ of
$R_0^{(0)}$ and the row $x - k$ of $R_1^{(0)}$ are transformed by a
rotation determined from $R_0^{(0)}[x,x]$ and $R_1^{(0)}[x-k,x]$ to
annihilate the latter.  This is the main conceptual difference from
the tall-and-skinny QR, described, e.g.,
in~\cite{Demmel-Grigori-Hoemmen-Langou-2008,Demmel-Grigori-Hoemmen-Langou-2012},
where the combining is performed by the structure-aware Householder
reflectors.  The Givens rotations are chosen to avoid the expensive
column norm computations.

\begin{figure}[hbt]
  \begin{center}
    \includegraphics{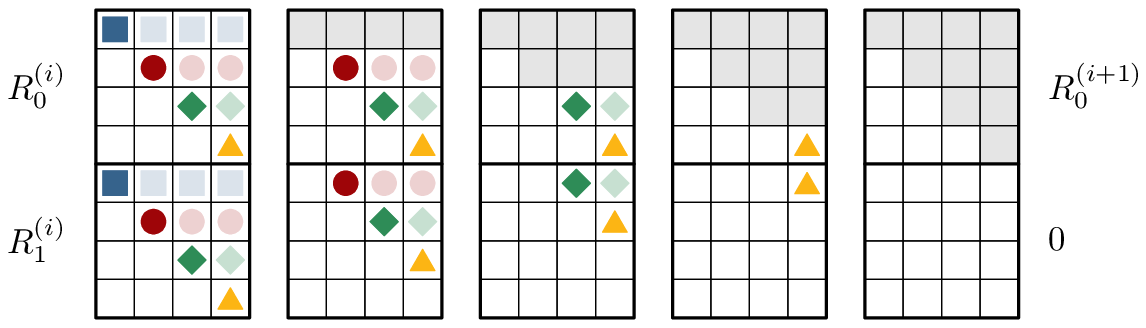}
  \end{center}
  \caption{A parallel ``peel-off'' procedure on\/ $4 \times 4$
    matrices, in\/ $4$ stages.  The rows with the same symbol are
    transformed in a stage independently of each other by the Givens
    rotations computed from the dark-colored diagonal elements.  The
    final elements of $R_0^{(i+1)}$ are fully shaded.}
  \label{fig:4.2}
\end{figure}

Each remaining chunk of
$\begin{bmatrix}G_{\mathsf{p}} & G_{\mathsf{q}}\end{bmatrix}$ is
loaded into $A_1^{(i)}$, factored as
$A_1^{(i)} = Q_1^{(i)} R_1^{(i)}$, and combined with $R_0^{(i)}$ to
obtain $R_0^{(i+1)}$.  After the final
$R_{\mathsf{p}\mathsf{q}} = R_0^{(n/32-1)}$ is formed,
$V_{\mathsf{p}\mathsf{q}}'$ is set to $I_{32}$.

Unfortunately, this approach is not recommendable when efficiency
matters.  For example, on matrices of order $3072$, the QR
factorization is $12$--$15$ times slower than the Cholesky
factorization, depending on the column norm computation algorithm.
%
%
\subsection{The orthogonalization}\label{sec:4.3}
%
%
In this phase, the inner pointwise Jacobi (H)SVD method is run on
$R_{\mathsf{p}\mathsf{q}}$.  A constant memory parameter
$M_{\mathrm{s}}$, representing the maximal number of inner sweeps,
decides whether the block-oriented ($M_{\mathrm{s}} = 1$) or the full
block variant ($M_{\mathrm{s}} > 1$, usually $M_{\mathrm{s}} = 30$)
should be performed.

The inner p-strategy $\mathcal{S}_{32}'$ is encoded into a constant
memory lookup table $S_{32}'$.  In principle, $\mathcal{S}_{32}'$ need
not be of the same type of p-strategies as $\mathcal{S}_{\mathsf{b}}$.
For example, $\mathcal{S}_{32}'$ may be of the Brent and Luk type, and
$\mathcal{S}_{\mathsf{b}}$ may be of the Matharam-Eberlein type, but
usually a choice of the p-strategies is uniform, with only a single
type for all levels.

In each p-step $s$ of an inner sweep $r$, the warp $y$ is assigned the
pivot pair $(p,q) = S_{32}'(r,s)[y]$, i.e., the pair of columns
$\begin{bmatrix}g_p & g_q\end{bmatrix}$ of $R_{\mathsf{p}\mathsf{q}}$,
and the pair of columns $\begin{bmatrix}v_p & v_q\end{bmatrix}$ of
$V_{\mathsf{p}\mathsf{q}}'$.  Then, the following subphases are
executed:
\begin{compactenum}
\item The $2 \times 2$ pivot matrix $\widehat{H}_{pq}$ from
  (\ref{2.3}) is computed.  As explained in
  Subsection~\ref{sec:4.3.1}, three dot products (for $h_{pq}$,
  $h_{pp}$, and $h_{qq}$) are needed when the rotation formulas from
  \cite{Drmac-97a} are not used.  The elements $g_p[x]$, $g_q[x]$,
  $v_p[x]$, and $v_q[x]$ are preloaded into registers of a thread with
  lane ID $x$, so, e.g., $V_{\mathsf{p}\mathsf{q}}'$ may be
  overwritten as a scratch space for warp-level reductions on Fermi
  GPUs.
\item The relative orthogonality criterion for $g_p$ and $g_q$ is
  fulfilled if
  \begin{displaymath}
    |h_{pq}| < c(\varepsilon) \sqrt{h_{pp}} \sqrt{h_{qq}} = c(\varepsilon) \| g_p \|_2 \| g_q \|_2,
  \end{displaymath}
  where $c(\varepsilon) = \varepsilon \sqrt{\hat{n}}$, and $\hat{n}$ is
  the matrix order (here, $\hat{n} = 32$).  If $g_p$ and $g_q$ are
  relatively orthogonal, then set an indicator $\rho_s$, that
  determines whether a rotation should be applied, to $\rho_s = 0$,
  else set $\rho_s = 1$.  Note that $\rho_s$ is a per-thread variable,
  but has the same value across a warp.
\item Let $a_s$ be a thread-block-wide number of warps about to
  perform the rotations.  A warp has $32$ threads, so
  $a_s = (\Sigma \rho_s)/32$, where the sum ranges over all threads in
  the thread block.  Compute $a_s$ as
  $\mathop{\text{\texttt{\_\_syncthreads\_count}}}(\rho_s)/32$.
  Since the pointwise Jacobi process stops if no rotations occurred in
  a sweep, we have to increase a per-sweep counter of rotations,
  $A_r$, by $a_s$.  The counters $a_s$ and $A_r$ are kept per thread,
  but have the same value in the thread block.
\item Let the pivot indices $p$ and $q$ correspond to the columns $k$
  and $\ell$, respectively, of the input factor $G$.  If
  $k \leq n_+ < \ell$, then compute the transformation
  $\widehat{V}_{pq}$ from (\ref{2.4}) as a hyperbolic rotation
  (\ref{4.2}), else as a trigonometric one (\ref{4.1}), according to
  Subsection~\ref{sec:4.3.1}.  If $\cs\varphi \neq 1$, then set
  $\rho_s' = 1$ (a proper rotation), else leave $\rho_s' = 0$ to
  indicate that the rotation is nearly identity.  If $\rho_s$ was $0$,
  just determine if the rotation would be a trigonometric or a
  hyperbolic one, instead of computing it.
\item If the rotation is trigonometric, find the new diagonal
  elements, $h_{pp}'$ and $h_{qq}'$,
  \begin{displaymath}
    h_{pp}' = \FMA(\tan\varphi, h_{pq}, h_{pp}),\quad
    h_{qq}' = \FMA(-\tan\varphi, h_{pq}, h_{qq}).
  \end{displaymath}
  If $\rho_s = 0$ (i.e., $\widehat{V}_{pq}$ is the identity), take
  $h_{pp}' = h_{pp}$ and $h_{qq}' = h_{qq}$.
  To keep the eigenvalues sorted
  non-increasingly~\cite{Hari-SingerSanja-SingerSasa-2014},
  if $h_{pp}' < h_{qq}'$ when $\ell \leq n_+$, or
  $h_{pp}' > h_{qq}'$ when $k > n_+$, set
  $P_2 = \left[\begin{smallmatrix}0 & 1\\1 & 0\end{smallmatrix}\right]$,
  else $P_2 = I_2$.  Define
  \begin{displaymath}
    \widehat{V}_{pq}' = \widehat{V}_{pq} P_2.
  \end{displaymath}
  The eigenvalue order tends to stabilize eventually, thus no swapping
  is usually needed in the last few
  sweeps~\cite{Mascarenhas-95,Novakovic-SingerSanja-2011}.
  If the rotation is hyperbolic, to keep $J$ partitioned, set
  $\widehat{V}_{pq}' = \widehat{V}_{pq}$ (there is no sorting, and the
  new diagonal elements are not needed).  An unpartitioned $J$ could
  lead to a slower
  convergence~\cite{SingerSanja-SingerSasa-Hari-Bokulic-Davidovic-Juresic-Uscumlic-2007}.
\item Apply, per thread, $\widehat{V}_{pq}'$ to
  $\begin{bmatrix}g_p[x] & g_q[x]\end{bmatrix}$ and
  $\begin{bmatrix}v_p[x] & v_q[x]\end{bmatrix}$ from the right, and
  store the new values into shared memory.
\item Compute $b_s = (\Sigma \rho_s')/32$, similarly to subphase $3$,
  and increase a per-sweep counter of proper rotations $B_r$ by $b_s$.
  This concludes the actions of one p-step.
\end{compactenum}

After the sweep $r$ finishes, update $B$, the total number of proper
rotations in the thread block, by $B_r$.  If no more sweeps follow,
i.e., if $A_r = 0$ (no rotations have been performed in the last
sweep), or $r = M_{\mathrm{s}}$ (the maximal number of sweeps is
reached), the thread $[0,0]$ atomically adds $B$ to the global
rotation counter $\mathsf{B}$, mapped from the CPU memory.

In $\mathsf{B}$ the number of rotations from all thread blocks is
accumulated.  Note that $\mathsf{B}$ is accessible from both the CPU
and the GPU, and is the sole quantum of information needed to stop the
global Jacobi process.  More details about a GPU-wide stopping
criterion can be found in Subsection~\ref{sec:4.5}.  This ends
\texttt{orthogonalize} phase.
%
%
\subsubsection{The Jacobi rotations}\label{sec:4.3.1}
%
%
The numerically stable, state-of-the-art procedure of computing the
trigonometric Jacobi rotations is described in~\cite{Drmac-97a}.  The
procedure relies on computing the column norms reliably, as described
in Appendix~\ref{sec:nrm}.

The rest of the computation from~\cite{Drmac-97a} is straightforward
to implement.  Since the entire shared memory per thread block is
occupied, storing and updating the column scales, as
in~\cite{Anda-Park-94}, is not possible without changing the shared
memory configuration and reducing the L1 cache.  The memory traffic
that would thus be incurred overweights the two additional
multiplications by a cosine per GPU thread.  Therefore, rotations in
the following form ($\FMA$ followed by a multiplication, if
$\cos\varphi \neq 1$) are chosen,
\begin{equation}
  \begin{bmatrix}
    g_p' & g_q'
  \end{bmatrix}
  = \cos\varphi
  \begin{bmatrix}
    g_p^{} & g_q^{}
  \end{bmatrix}
  \begin{bmatrix}
    \hphantom{-}1 & \tan\varphi\\
    -\tan\varphi  & 1
  \end{bmatrix}.
\label{4.1}
\end{equation}
The hyperbolic rotations may be computed similarly to the
trigonometric ones, by adapting the ideas from~\cite{Drmac-97a}, in
the form
\begin{equation}
  \begin{bmatrix}
    g_p' & g_q'
  \end{bmatrix}
  = \cosh\varphi
  \begin{bmatrix}
    g_p^{} & g_q^{}
  \end{bmatrix}
  \begin{bmatrix}
    1 & \tanh\varphi\\
    \tanh\varphi & 1
  \end{bmatrix}.
\label{4.2}
\end{equation}

Let \texttt{DDRJAC} be a procedure that computes, as
in~\cite{Drmac-97a}, the trigonometric rotations in form (\ref{4.1})
from the column norms it obtains by calling \texttt{DRDSSQ} (see
Appendix~A).  On a Fermi GPU (matrix order $6144$), \texttt{DDRJAC} is
only $14$\% slower than a simple procedure we discuss in the sequel.
However, the protection from the input columns of too large (or too small)
norm that \texttt{DDRJAC} offers has to be complemented by the QR
factorization at all blocking levels, which is extremely expensive.

Assume instead that the Gram matrix formation, the ordinary scalar
products, and the induced norm computations never overflow.  By using
only correctly rounded arithmetic, $\cos\varphi$ and $\tan\varphi$
of~(\ref{4.1}), or $\cosh\varphi$ and $\tanh\varphi$ of~(\ref{4.2}),
may be computed as in~(\ref{4.3})--(\ref{4.6}) (an adapted version of
the Rutishauser formulas~\cite{Rutishauser-66}):
\begin{gather}
  h = h_{qq} - \mathfrak{t} \cdot h_{pp}; \quad \ct 2\varphi = \mathfrak{t} \cdot \frac{h}{2 h_{pq}};\label{4.3}\\[3pt]
  |\ct \varphi| = |\ct 2\varphi| + \sqrt{\FMA(\ct 2\varphi, \ct 2\varphi, \mathfrak{t})};\label{4.4}\\[3pt]
  \tn\varphi = \sgn(\ct 2\varphi) \cdot \RCP(|\ct \varphi|);\label{4.5}\\
  \cs_1 \varphi = \RCP(\sqrt{\FMA(\mathfrak{t} \cdot \tn\varphi, \tn\varphi, 1)});\quad
  \cs_2 \varphi = \frac{|\ct \varphi|}{\sqrt{\FMA(|\ct \varphi|, |\ct \varphi|, \mathfrak{t})}}.\label{4.6}
\end{gather}

Formulas (\ref{4.3})--(\ref{4.6}), for $\mathfrak{t} = 1$, produce
the parameters of a trigonometric rotation ($\ct=\cot$, $\tn=\tan$, $\cs=\cos$),
and for $\mathfrak{t} = -1$, of a hyperbolic rotation ($\ct=\coth$, $\tn=\tanh$, $\cs=\cosh$).
If, numerically, $|\coth 2\varphi| = 1$, it is substituted by $5 / 4$
(see~\cite{Veselic-93}).

If $|\cot 2\varphi| < \sqrt{\varepsilon}$, then
$\sqrt{\FMA(\cot 2\varphi, \cot 2\varphi, 1)} = 1$, and~(\ref{4.4}) in
the trigonometric case simplifies to
$|\cot\varphi| = |\cot 2\varphi| + 1$.  If
$|\ct 2\varphi| \geq \sqrt{2 / \varepsilon}$, then (barring an
overflow)
$\sqrt{\FMA(\ct 2\varphi, \ct 2\varphi, \mathfrak{t})} = |\ct 2\varphi|$,
with~(\ref{4.4}) and~(\ref{4.6}) simplifying to
$|\ct\varphi| = 2 \cdot |\ct 2\varphi|$ and $\cs\varphi = 1$,
respectively.  These simplifications avoid taking square roots and a
possible overflow of $\ct^2 2\varphi$, at a price of at most $3$
floating-point comparisons.  If $|\ct 2\varphi| \leq \nu/4$, then
$\tn$ is normalized.

In~(\ref{4.6}) there are two mathematically (but not numerically)
equivalent expressions, $\cs_1 \varphi$ and $\cs_2 \varphi$, which
compute the cosine.  By a similar analysis as above,
$|\ct\varphi| \geq \sqrt{2 / \varepsilon}$ implies
$\cs_2 \varphi = 1$, $\tn \varphi \leq \sqrt{\varepsilon / 2}$, and
therefore $\cs_1 \varphi = 1$.  Testing that condition also avoids an
overflow of $\ct^2 \varphi$.  Motivated by the preliminary results
described in Appendix~\ref{sec:rot}, we have chosen the $\cs_2\varphi$
formula for our implementation.
%
%
\subsection{The postmultiplication}\label{sec:4.4}
%
%
This phase postmultiplies
$\begin{bmatrix}G_{\mathsf{p}} & G_{\mathsf{q}}\end{bmatrix}$
and, optionally,
$\begin{bmatrix}V_{\mathsf{p}} & V_{\mathsf{q}}\end{bmatrix}$
by $V_{\mathsf{p}\mathsf{q}}'$ only if the rotation counter from
\texttt{orthogonalize} is non-zero, i.e.,
if $V_{\mathsf{p}\mathsf{q}}' \neq I_{32}$.  Let
$A_{\mathsf{p}\mathsf{q}}$ alias the first half of
$G_{\mathsf{p}\mathsf{q}}$ (i.e., $R_{\mathsf{p}\mathsf{q}}$).  The
procedure, detailed in Alg.~\ref{alg:4.2}, is based on the Cannon
parallel matrix multiplication algorithm~\cite{Cannon-69}.

\begin{algorithm}{\small
\SetKwInOut{KwDesc}{Description}
\SetKwFunction{Sync}{\_\_syncthreads}
\SetKwFor{For}{for}{}{endfor}
\SetKwFor{uFor}{unrolled for}{}{endfor}
\KwDesc{Input: $\begin{bmatrix}A_{\mathsf{p}} & A_{\mathsf{q}}\end{bmatrix}, A \in \{G, V\}$.
  Output: $\begin{bmatrix}A_{\mathsf{p}}' & A_{\mathsf{q}}'\end{bmatrix} = \begin{bmatrix}A_{\mathsf{p}} & A_{\mathsf{q}}\end{bmatrix} V_{\mathsf{p}\mathsf{q}}'$.}
\BlankLine
\For(\tcp*[f]{multiply the next chunk of $\begin{bmatrix}A_{\mathsf{p}} & A_{\mathsf{q}}\end{bmatrix}$ by $V_{\mathsf{p}\mathsf{q}}'$})%
{$(i = x;\, i < n;\, i \mathop{{+}{=}} 32)$}%
{$A_{\mathsf{p}\mathsf{q}}[x,y] = A_{\mathsf{p}}[i,y];\quad
A_{\mathsf{p}\mathsf{q}}[x,y'] = A_{\mathsf{q}}[i,y]$%
\tcp*{load the RAM chunk into $A_{\mathsf{p}\mathsf{q}}$}
\Sync{}\tcp*{ensure the shared memory writes have taken effect}
$a_{xy} = a_{xy'} = 0$\tcp*{$\begin{bmatrix}A_{\mathsf{p}}' & A_{\mathsf{q}}'\end{bmatrix}$ elements kept in registers}
$j = (y + x) \bmod 32;\quad j' = (y' + x) \bmod 32$\tcp*{initial skew modulo $32$}
\uFor(\tcp*[f]{multiply-and-cyclic-shift})%
{$(k = 0;\, k < 32;\, {+}{+}k)$}%
{$a_{xy} = \FMA(A_{\mathsf{p}\mathsf{q}}[x,j], V_{\mathsf{p}\mathsf{q}}'[j,y], a_{xy})$\tcp*{update $A_{\mathsf{p}}'[i,y]$}
$a_{xy'} = \FMA(A_{\mathsf{p}\mathsf{q}}[x,j'], V_{\mathsf{p}\mathsf{q}}'[j',y'], a_{xy'})$\tcp*{update $A_{\mathsf{q}}'[i,y]$}
$j = (j + 1) \bmod 32;\quad j' = (j' + 1) \bmod 32$\tcp*{cyclic shift modulo $32$}
}%
\Sync{}\tcp*{ensure that $A_{\mathsf{p}\mathsf{q}}$ is free to be overwritten}
$A_{\mathsf{p}}'[i,y] = a_{xy};\quad
A_{\mathsf{q}}'[i,y] = a_{xy'}$\tcp*{store the product in the RAM chunk}
}%
}%
\caption{Device function for Cannon-like postmultiplication by $V_{\mathsf{p}\mathsf{q}}'$.}
\label{alg:4.2}
\end{algorithm}

Finally, Fig.~\ref{fig:4.3} summarizes the entire \texttt{pStep}
kernel, from a perspective of the shared memory state transitions per
thread block.  The GPU RAM is accessed by one read (when no rotations
occur), or by two reads and one write per element of $G$ (and,
optionally, at most one read and write per element of $V$), with all
operations fully coalesced.  The only additional global memory traffic
are the atomic reductions necessary for the convergence criterion in
\texttt{orthogonalize}.

\begin{figure}[hbt]
  \begin{center}
    \includegraphics{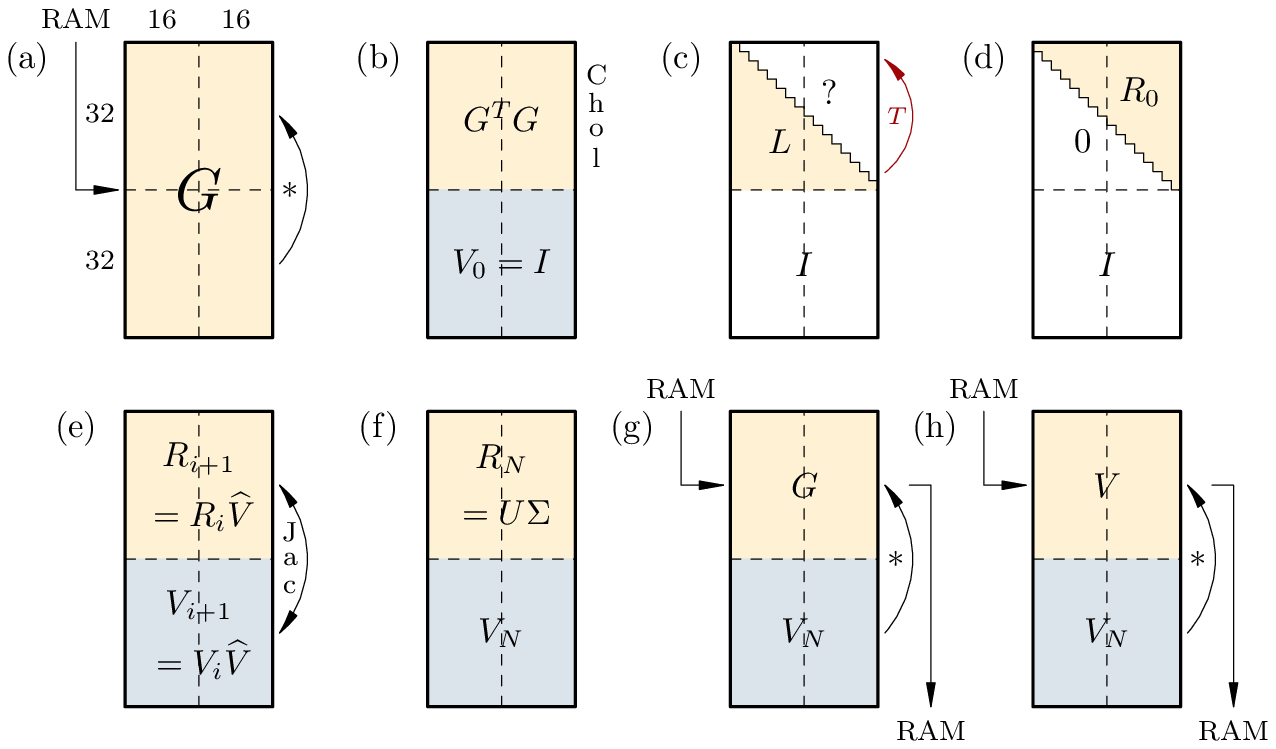}
  \end{center}
\caption{An overview of \texttt{pStep} kernel per thread block, the
  full block variant.  Each subfigure depicts the state of the shared
  memory, the computation subphase performed, and the data transfer in
  or out of the GPU RAM\@.  Subfigures (a)--(d) illustrate
  \texttt{factorize} with the Cholesky factorization, (e) and (f)
  belong to \texttt{orthogonalize}, and (g) and (h) to
  \texttt{postmultiply} phase.}
\label{fig:4.3}
\end{figure}
%
%
\subsection{A GPU-wide convergence criterion}\label{sec:4.5}
%
%
Contrary to the pointwise Jacobi algorithm, which is considered to
converge when no rotations have been performed in a sweep, stopping of
the block algorithms for large inputs is more complicated than
observing no rotations in a block sweep.  There has to be an
additional, more relaxed stopping criterion at the block level
(motivated in the sequel), while keeping the one at the inner
($32 \times 32$) level unchanged.

The columns addressed by a pivot block-pair should be relatively
orthogonal after completion of the \texttt{pStep} call in the
full-block variant, but instead they may have departed from
orthogonality, because
(see~\cite{Hari-SingerSanja-SingerSasa-2014,SingerSanja-SingerSasa-Novakovic-Uscumlic-Dunjko-2012})
\begin{compactenum}
\item the accumulated rotations are not perfectly ($J$-)orthogonal,
  and
\item the postmultiplication introduces rounding errors.
\end{compactenum}
Independently from that, in all variants, even the numerically
orthogonal columns, when subjected to \texttt{factorize} (and its
rounding errors), may result in the shortened ones that fail the
relative orthogonality criterion.

If an orthogonality failure results from the first two causes, the
ensuing rotations might be justified.  However, if the failure is
caused only by the rounding errors of the factorization process, the
spurious rotations needlessly spoil the overall convergence.

To overcome this problem, we devised a simple heuristics to avoid
excessive block sweeps with just a few rotations.  We expect these
rotations not to be proper, i.e., to have very small angles.  Let
$\mathsf{B}$ be a counter in the CPU RAM, mapped to the GPU RAM\@.
The counter is reset at the beginning of each block sweep, and is
updated from the \texttt{pStep} calls as described in
Subsection~\ref{sec:4.3}.  At the end of a block sweep, $\mathsf{B}$
contains the total number of proper rotations in that sweep.  If
$\mathsf{B} = 0$, or the maximal number of block sweeps has been
reached without convergence, the process stops.

This heuristics may skip over a relatively small number of legitimate
rotations, but nevertheless produces reasonable relative errors in the
computed singular values (see Section~\ref{sec:6}).  A reliable way of
telling (or avoiding) the exact cause of the orthogonality failures is
strongly needed in that respect.
%
%
\section{A multi-GPU algorithm}\label{sec:5}
%
%
In this Section we apply the same blocking principles one level up the
hierarchy, to the case of multiple GPUs.  As a proof-of-concept, the
algorithm is developed on a $4$-GPU Tesla S2050 system, and
implemented as a single CPU process with $4$ threads, where the thread
$0,\ldots,3$ controls the same-numbered GPU\@.  Were the GPUs
connected to multiple machines, on each machine the setup could be
similar, with a CPU process and an adequate number of threads.
Multiple processes on different machines could communicate via the
CUDA-optimized MPI subsystem.  Except replacing the inter-GPU
communication APIs, the algorithm would stay the same.

Each of $\mathsf{g}$ GPUs holds two block-columns addressed by a pivot
block-pair with a total of $\mathsf{n} = n / \mathsf{g}$ columns.  For
simplicity, we assume $n \bmod \mathsf{g} = 0$.  After an outer block
step, a single block-column on a GPU $i$ is sent to a GPU $j$, and
replaced by the one received from a GPU $k$, where $j$ may or may not
be equal to $k$, according to a block-column mapping
$S_{2\mathsf{g}}^{\prime\prime}$.  For the outer p-strategy
$\mathcal{S}_{2\mathsf{g}}^{\prime\prime}$, a block-column mapping has
to be chosen such that the communication rules implied by the mapping
are (nearly) optimal for the given network topology.

For example, in our test system with $\mathsf{g} = 4$, a GPU $i$
communicates with a GPU $j$, $j = i \operatorname{xor} 1$, faster than
with the others.  We maximized the amount of fast exchanges within an
outer block sweep for
$\text{\reflectbox{$\mathcal{R}$}}_8^{\parallel}$ (equivalent to
Mantharam--Eberlein BR on a two-dimensional hypercube) to $3$, by
choosing which pivot block-pair is assigned to which GPU in each block
step.  The result is a block-column mapping shown in Fig.~\ref{fig:5.1}.

\begin{figure}[hbt]
\begin{center}
\includegraphics{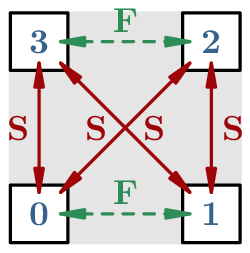}\hfill
\includegraphics{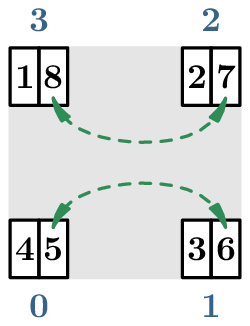}\hfill
\includegraphics{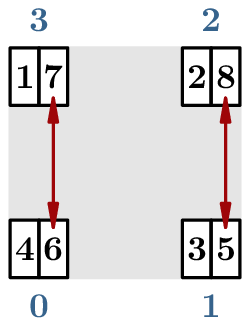}\hfill
\includegraphics{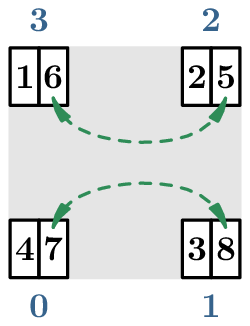}\\[6pt]
\includegraphics{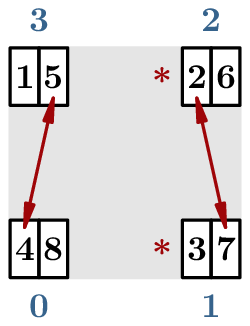}\hfill
\includegraphics{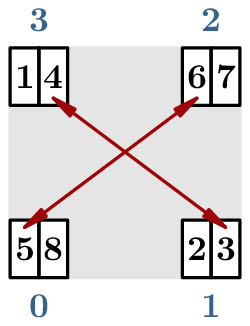}\hfill
\includegraphics{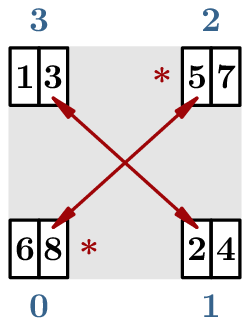}\hfill
\includegraphics{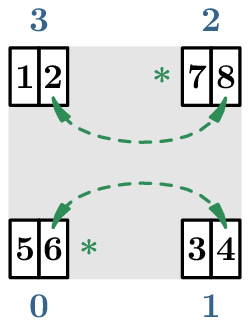}
\end{center}
\caption{The block-column mapping in a single block sweep of a
  p-strategy equivalent to Mantharam--Eberlein BR to GPUs\/
  $0,\ldots,3$.  The fast (F) communications for column exchanges
  between GPU peers are denoted by dashed curves, and the slow (S)
  exchanges by solid lines.  Two-speed communication is defined on the
  top left subfigure.  The (logical) column swaps needed to maintain
  $p < q$ are shown by an asterisk.}
\label{fig:5.1}
\end{figure}

Besides the two outer block-columns of $G$ (and, optionally, $V$),
stored in $G_A$ and $V_A$ regions of the GPU RAM, respectively, an
additional buffer space of the same size, $G_B$ and $V_B$, has to be
allocated to facilitate the BLAS 3-style matrix multiplications and
the full-duplex asynchronous communication between GPUs.  Also, for
the shortening and the single-GPU Jacobi phases, two
$\mathsf{n} \times \mathsf{n}$ matrices, $X$ and $Y$, are needed.
With a small auxiliary space AUX for the final singular value
extraction, the total memory requirements are at most
$m \times 5 \mathsf{n}$ \texttt{double} elements per GPU.

In an outer block step the following operations are performed (see
Fig.~\ref{fig:5.2}):
\begin{compactenum}
\item[(0)] form the Gram matrix $G_A^T G_A^{}$ in $X$ by
  \texttt{cublasDsyrk};
\item[(1)] factorize $G_A^T G_A^{} = R_{}^T R$ by the Cholesky
  factorization (we have chosen hybrid MAGMA's \texttt{dpotrf\_gpu},
  and this is the only place where a CPU is used for computation,
  which may be circumvented by a GPU-only implementation);
\item[(2a)] \underline{case (acc.)}: if accumulation of the product
  $\widehat{V}$ of the Jacobi rotations is desired, call a full SVD
  single-GPU Jacobi variant (the full block, the block-oriented, or a
  hybrid one) from Section~\ref{sec:4} on $X$, storing $\widehat{V}$
  in $Y$; else
\item[(2b)] \underline{case (solve)}: copy $R$ from $X$ to $Y$, call a
  partial SVD single-GPU Jacobi variant on $Y$, and solve the
  triangular linear system
  $R \widehat{V} = \widehat{U} \widehat{\Sigma}$ for
  $\widehat{V}$ by \texttt{cublasDtrsm}, with the original $R$ in $X$
  and $\widehat{V}$ overwriting $\widehat{U} \widehat{\Sigma}$ in $Y$;
\item[(3)] postmultiply $G_A$ and $V_A$ by $\widehat{V}$, using two
  \texttt{cublasDgemm} calls running in their own CUDA streams, and
  store the updated block-columns in $G_B$ and $V_B$;
\item[(4)] ensure that all GPUs have completed the local updates by a
  device-wide synchronization (\texttt{cudaDeviceSynchronize}),
  followed by a process-wide thread synchronization (wait on a common
  barrier), and a suitable MPI collective operation (e.g.,
  \texttt{MPI\_Barrier}) in the multi-process case;
\item[(5)] start, via CUDA streams, the asynchronous sends of one
  block-column from $G_B$ and the corresponding one from $V_B$ to
  another GPU, and start the asynchronous copies of the other column
  of $G_B$ and the corresponding one of $V_B$ to either the first or
  the second block-column of $G_A$ and $V_A$, according to the
  block-column mapping rules for transition to the subsequent block
  step;
\item[(6)] wait for the outstanding asynchronous operations to finish
  by the same synchronization procedure as in (4), after which a block
  step is completed.
\end{compactenum}

At the end of an outer block sweep, the threads (and processes, where
applicable) ${+}$-reduce their local counters $\mathsf{B}_i$ of proper
rotations (cf.~Subsection~\ref{sec:4.5}) to the system-wide number
$\sum_i\mathsf{B}_i$ of proper rotations performed in all block steps
in that sweep.  If the result is $0$, or the limit on the number block
sweeps has been reached, the iteration stops  and the final singular
values are extracted.

The full block variant of phases (2a) and (2b) usually has $30$ sweeps
limit for both the inner blocking and the pointwise, shared-memory
Jacobi level.  The block-oriented variant has both limits set to $1$.
Between them many hybrid variants may be interpolated.

Observe that phase (4) forces all GPUs to wait for the slowest one, in
terms of the execution of phase (2a) or (2b).  The full block variant
exhibits the largest differences in running times between GPUs,
depending on how orthogonal the block-columns are in the current block
step.  Although the full block variant is the fastest choice for a
single-GPU algorithm (see Section~\ref{sec:6}), it may be up to $35$\% 
slower in a multi-GPU algorithm than the block-oriented variant, which
has a predictable, balanced running time on all GPUs.

A reasonable hybrid variant might try to keep the running times
balanced.  A CPU thread that first completes the full block variant of
(2a) or (2b) informs immediately other threads, before proceeding to
phase (4).  The other threads then stop their inner block sweeps loops
in (2a) or (2b) when the running iteration is finished.

The wall execution times of such an approach may be even lower than
the block-oriented variant, but on the average are $10$\% higher.
Moreover, both the full block and its hybrid variant induce the larger
relative errors in $\Sigma$ than the block-oriented variant.  Such
effect may be partially explained, as in Section~\ref{sec:6}, by the
same reasons valid for the single-GPU case (more rotations applied),
but the larger differences in the multi-GPU case require further
attention.  We have therefore presented the numerical tests for the
block-oriented multi-GPU variant only.

\begin{figure}[hbt]
\begin{center}
\includegraphics{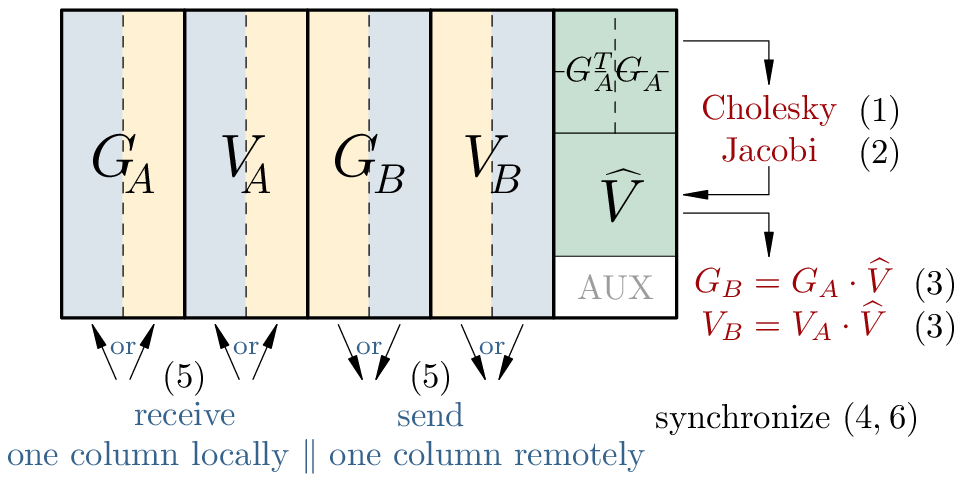}
\end{center}
\caption{Schematics of a GPU's memory organization and the outer block
  step phases.  Opera\-tions with the same number may be performed
  concurrently, in streams, if the hardware allows that.}
\label{fig:5.2}
\end{figure}
%
%
\section{Numerical testing}\label{sec:6}
%
%
In this section we define the testing data, describe the hardware, and
present the speed and accuracy results for both the single-GPU and the
multi-GPU implementations.  By these results we also confirm our
p-strategies ($\text{\reflectbox{$\mathcal{R}$}}^{\parallel}$ and
$\text{\reflectbox{$\mathcal{C}$}}^{\parallel}$)\footnote{Throughout
  this Section, we omit the subscripts indicating the matrix order on
  the p-strategies' symbols.  For each occurrence of a particular
  symbol, the matrix order is implied by the context.} from
Section~\ref{sec:3} as a reliable choice for implementing the fast
Jacobi algorithms on the various parallel architectures.

Let $\mathtt{norm}$ and $\mathtt{unif}$ be the double precision
pseudorandom number generators, such that
$\mathop{\mathtt{norm}}(\mu,\sigma)$ returns the non-zero samples from
normal distribution $\mathop{\mathcal{N}}(\mu,\sigma)$, and
$\mathop{\mathtt{unif}}(S)$ returns the samples from the continuous
uniform distribution $\mathcal{U}$ over $S$.  We have generated the
following pseudorandom spectra, for $k = 1, \ldots, 16$:
\begin{compactenum}
\item
  $\Lambda_k^{(1)}(1{:}16) = 0.5$;\quad
  $\Lambda_k^{(1)}(17{:}1024 k) = \mathop{\mathtt{norm}}(0,0.1)$,
\item
  $\Lambda_k^{(2)} = 1 + \Lambda_k^{(1)}$ (verified to be greater than zero),
\item
  $\Lambda_k^{(3)}(1{:}1024 k) = \pm\mathop{\mathtt{unif}}(\langle 10^{-7}, 10 k \rangle)$,
  where a positive or a negative sign for each $\Lambda_k^{(3)}(i)$ is
  chosen independently for $1 \leq i \leq 1024 k$ with equal
  probability,
\item
  $\Lambda_k^{(4)}(1{:}1024 k) = \mathop{\mathtt{unif}}(\langle 10^{-7}, 10 k \rangle)$.
\end{compactenum}
These arrays have been casted to the Fortran's quadruple precision
type, and denoted by $\mathbf{\Lambda}_k^{(1)}$ to
$\mathbf{\Lambda}_k^{(4)}$.  By a modified LAPACK \texttt{xLAGSY}
routine, working in quadruple precision, a set of symmetric matrices
$\mathbf{A}_k^{(j)} = \mathbf{U}_k^{(j)} \mathbf{\Lambda}_k^{(j)} [\mathbf{U}_k^{(j)}]^T$
has been generated, for $j = 1, \ldots, 4$, by pre- and post-multiplying
$\mathbf{\Lambda}_k^{(j)}$ with a product $\mathbf{U}_k^{(j)}$ of the random
Householder reflectors.  The matrices $\mathbf{A}_k^{(j)}$ have then been
factored by the symmetric indefinite factorization with the complete
pivoting~\cite{Slapnicar-98} in quadruple precision:
\begin{displaymath}
  P_k^{(j)} \mathbf{A}_k^{(j)} [P_k^{(j)}]^T =
  \widehat{\mathbf{G}}_k^{(j)} \widetilde{P}_k^{(j)} [\widetilde{P}_k^{(j)}]^T \widehat{J}_k^{(j)} \widetilde{P}_k^{(j)} [\widetilde{P}_k^{(j)}]^T [\widehat{\mathbf{G}}_k^{(j)}]^T =
  \mathbf{G}_k^{(j)} J_k^{(j)} [\mathbf{G}_k^{(j)}]^T.
\end{displaymath}
The inner permutation $\widetilde{P}_k^{(j)}$ brings
$\widehat{J}_k^{(j)}$ into $J_k^{(j)} = \diag(I, -I)$ form.  For
$j \in \{2, 4\}$ the symmetric indefinite factorization is equivalent
to the Cholesky factorization with diagonal pivoting
($J_k^{(j)} = I$).  Finally, $\mathbf{G}_k^{(j)}$ have been rounded
back to double precision, and stored as the input factors $G_k^{(j)}$,
along with $\Lambda_k^{(j)}$ and $J_k^{(j)}$.

Since one of the important applications of the (H)SVD is the eigensystem
computation of the symmetric (in)definite matrices, the procedure just
described has been designed to minimize, as much as it is computationally
feasible, the effect of the rounding errors in the factorization part.  We
have, therefore, measured the relative errors of the computed
$\Sigma^2 J$ (from $G = U \Sigma V^T$) vs.~the given
$\Lambda_k^{(j)}$ (with the elements denoted $\lambda_i$, for
$1 \leq i \leq n$), i.e.,
\begin{equation}
\max_{i=1,\ldots,n}\frac{|\fl(\sigma_i)^2 j_i - \lambda_i|}{|\lambda_i|}.
\label{6.1}
\end{equation}

It may be more natural and reliable to compute the (H)SVD of
$G_k^{(j)}$ in quadruple precision and compare the obtained singular
values with the ones produced by the double precision algorithms.
For an extremely badly conditioned $\mathbf{A}$, $\sqrt{|\Lambda|}$
may not approximate the singular values of $G$ well; e.g., if by the
same procedure as above, $\mathbf{A}$ (definite or indefinite) is
generated, with $n = 4096$ and $\kappa_2(\mathbf{A}) \geq 10^{24}$,
the resulting $G$ may have the singular values (found by the quadruple
Jacobi algorithm) differing in at least $4$--$5$ least significant
double precision digits from the prescribed singular values
$\sqrt{|\Lambda|}$.  However, for a larger $n$, the quadruple
precision SVD computation is infeasible.  We have therefore verified
accuracy of our algorithms as in (\ref{6.1}), but have generated the
modestly conditioned test matrices to avoid the problems described.

The NVIDIA graphics testing hardware, with accompanying CPUs, consists of:
\begin{compactenum}
\item[A.] Tesla C2070 (Fermi) GPU and Intel Core i7--950 CPU ($4$ cores),
\item[B.] Tesla K20c (Kepler) GPU and Intel Core i7--4820K CPU ($4$ cores),
\item[C.] Tesla S2050 (Fermi) 4 GPUs and two Intel Xeon E5620 CPUs
($2 \times 4$ cores).
\end{compactenum}
The software used is CUDA 5.5 (nvcc and cuBLAS) under $64$-bit Windows
and Linux, and MAGMA 1.3.0 (with sequential and parallel Intel MKL
11.1) under $64$-bit Linux.

As shown in Table~\ref{tbl:6.1}, the sequential Jacobi algorithm
\texttt{DGESVJ}, with the parallel MKL BLAS~1 operations, on machine C
runs approximately $16$ times slower than a single-GPU (Fermi)
algorithm for the large enough inputs.

\begin{table}[hbt]
\caption{The ratio of the wall running times of \texttt{DGESVJ} on
  machine C vs.~a single GPU (Fermi) for the Cholesky factors of the
  matrices of order $n = 1024 k$ with spectra $\Lambda_k^{(2)}$.}
\begin{center}
  {\small\begin{tabular}{@{}cc@{}}
      \toprule
      $k$ & $\texttt{DGESVJ} / \text{\reflectbox{$\mathcal{R}$}}^{\parallel}$ \\
      \midrule
      1 & \hpz 5.57 \\
      2 & \hpz 8.61 \\
      3 & 11.75 \\
      4 & 11.83 \\
      \bottomrule
  \end{tabular}
  \qquad
  \begin{tabular}{@{}cc@{}}
      \toprule
      $k$ & $\texttt{DGESVJ} / \text{\reflectbox{$\mathcal{R}$}}^{\parallel}$ \\
      \midrule
      5 & 12.34 \\
      6 & 13.47 \\
      7 & 13.62 \\
      8 & 13.58 \\
      \bottomrule
  \end{tabular}
  \qquad
  \begin{tabular}{@{}cc@{}}
      \toprule
      $k$ & $\texttt{DGESVJ} / \text{\reflectbox{$\mathcal{R}$}}^{\parallel}$ \\
      \midrule
      \hpz 9 & 14.89 \\
          10 & 15.45 \\
          11 & 15.62 \\
          12 & 16.14 \\
      \bottomrule
  \end{tabular}
  \qquad
  \begin{tabular}{@{}cc@{}}
      \toprule
      $k$ & $\texttt{DGESVJ} / \text{\reflectbox{$\mathcal{R}$}}^{\parallel}$ \\
      \midrule
      13 & 16.49 \\
      14 & 16.46 \\
      15 & 16.19 \\
      16 & 16.00 \\
      \bottomrule
  \end{tabular}}
\label{tbl:6.1}
\end{center}
\end{table}

In Table~\ref{tbl:6.2} the differences in the execution times of the
$\text{\reflectbox{$\mathcal{R}$}}^{\parallel}$ p-strategy on Fermi
and Kepler are given.  There are the three main reasons, outlined in
Section~\ref{sec:4}, why the Kepler implementation is much faster than
the Fermi one.  In order of importance:
\begin{compactenum}
\item[(i)] $8$-byte wide shared memory banks on Kepler vs.~$4$-byte
  wide on Fermi---the profiler reports $99.8$\% shared memory
  efficiency on Kepler vs.~$49.8$\% on Fermi,
\item[(ii)] warp shuffle reductions on Kepler (the warp-level
  reductions do not need the shared memory workspace), and
\item[(iii)] no register spillage on Kepler, due to the larger
  register file.
\end{compactenum}
The other profiler metrics are also encouraging: the global memory
loads and stores are more than $99$\% efficient, and the warp
execution efficiency on Fermi is about $96.5$\%, which confirms that
the presented algorithms are almost perfectly parallel.

\begin{table}[hbt]
  \caption{The wall running times (in seconds) of a Fermi ($F$) vs.~a
    Kepler ($K$) GPU for the Cholesky factors of the matrices of order
    $n = 1024 k$ with spectra $\Lambda_k^{(2)}$, the full block variant.}
  \begin{center}
    {\small\begin{tabular}{@{}cccc@{}}
      \toprule
      $k$ & $\text{Kepler}\,[\text{s}]$ & $\text{Fermi}\,[\text{s}]$ & $K/F\,[\text{\%}]$ \\
      \midrule
       1 & \hpz\hpz   1.413099 & \hpz\hpz    2.376498 & 59.5 \\
       2 & \hpz\hpz   7.206334 & \hpz   12.438532 & 57.9 \\
       3 & \hpz  22.980686 & \hpz   35.783290 & 64.2 \\
       4 & \hpz  46.357804 & \hpz   84.466500 & 54.9 \\
       5 & \hpz  95.828870 &   160.382859 & 59.8 \\
       6 &  154.643361 &   261.917934 & 59.0 \\
       7 &  246.114488 &   403.150779 & 61.0 \\
       8 &  346.689433 &   621.341377 & 55.8 \\
      \bottomrule
    \end{tabular}
    \qquad
    \begin{tabular}{@{}cccc@{}}
      \toprule
      $k$ & $\text{Kepler}\,[\text{s}]$ & $\text{Fermi}\,[\text{s}]$  & $K/F\,[\text{\%}]$ \\
      \midrule
      \hpz 9 & \hpz 506.365598 & \hpz  850.279539 & 59.6 \\
      10 & \hpz 682.577101 &  1153.337956 & 59.2 \\
      11 & \hpz 904.212224 &  1545.451594 & 58.5 \\
      12 & 1148.881987 &  1970.591570 & 58.3 \\
      13 & 1439.391787 &  2500.931105 & 57.6 \\
      14 & 1809.888207 &  3158.116986 & 57.3 \\
      15 & 2196.755474 &  3820.551746 & 57.5 \\
      16 & 2625.642659 &  4662.748709 & 56.3 \\
      \bottomrule
    \end{tabular}}
  \end{center}
  \label{tbl:6.2}
\end{table}

Even though the instruction and thread block execution partial orders
may vary across the hardware architectures, the presented algorithms
are observably deterministic.  Combined with a strong IEEE
floating-point standard adherence of both the Fermi and the Kepler
GPUs, that ensures the numerical results on one architecture are
bitwise identical to the results on the other.  This numerical
reproducibility property should likewise be preserved on any future,
standards-compliant hardware.

We proceed by showing that
$\text{\reflectbox{$\mathcal{R}$}}^{\parallel}$ and
$\text{\reflectbox{$\mathcal{C}$}}^{\parallel}$ p-strategies are
superior in terms of speed to $\mathcal{R}^{\parallel}$,
$\mathcal{C}^{\parallel}$, the Brent and Luk ($\mathcal{B}$), and
modified modulus ($\mathcal{M}$) strategies, in both the definite
and the indefinite case.  By abuse of notation, we write
$\text{\reflectbox{$\mathcal{R}$}}_b^{\parallel}$ for
the block-ori\-ented variant (otherwise, we measure the full block
variant), and $\text{\reflectbox{$\mathcal{R}$}}_{4b}^{\parallel}$ for
its $4$-GPU implementation.  Fig.~\ref{fig:6.1} depicts the wall time
ratios of the other strategies
vs.~$\text{\reflectbox{$\mathcal{R}$}}^{\parallel}$.  Except
$\mathcal{B}$ and $\text{\reflectbox{$\mathcal{C}$}}^{\parallel}$, the
other strategies are consistently about $14$--$21$\% slower than
$\text{\reflectbox{$\mathcal{R}$}}^{\parallel}$, while
$\text{\reflectbox{$\mathcal{C}$}}^{\parallel}$ is almost equally fast
as $\text{\reflectbox{$\mathcal{R}$}}^{\parallel}$. Therefore, in the
sequel we have timed only
$\text{\reflectbox{$\mathcal{R}$}}^{\parallel}$.

\begin{figure}[hbt]
\begin{center}
\includegraphics{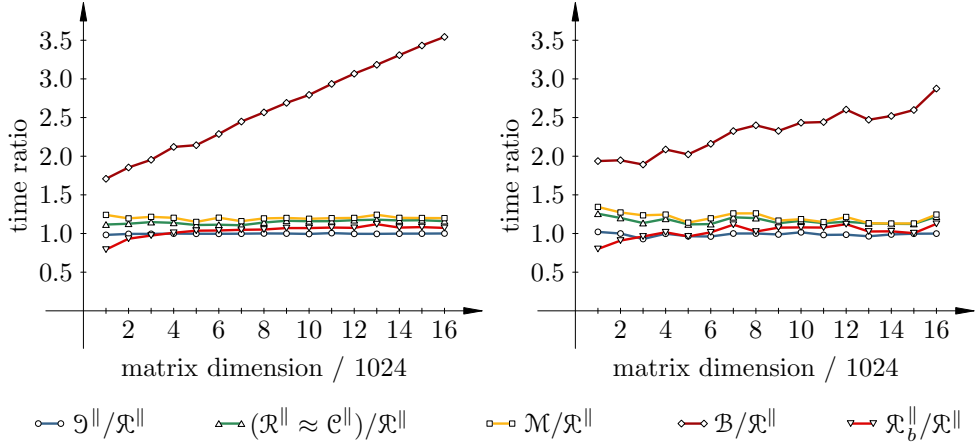}
\end{center}
\caption{The wall time ratio of the various parallel strategies on a
  single GPU (Fermi).  The test spectra for the left graph are
  $\Lambda_k^{(2)}$, and for the right graph are $\Lambda_k^{(1)}$.}
\label{fig:6.1}
\end{figure}

The standard counter-example that shows nonconvergence of the Jacobi
method under the Brent and Luk strategy for matrices of even orders
(first constructed by Hansen in~\cite{HansenE-63}, and later used
in~\cite{LukF-Park-89a}), is actually not a counter-example in the
usual diagonalization procedure which skips the rotations with the
very small angles, because there is no need for diagonalization of an
already diagonal matrix of order $2$.  On the contrary, the standard
algorithm will diagonalize this matrix in only one (second) step of
the first sweep.

However, this still does not mean that no serious issues exist
regarding convergence of the Jacobi method under $\mathcal{B}$.
Fig.~\ref{fig:6.1} indicates that a further investigation into the
causes of the extremely slow convergence (approaching $30$ block
sweeps) under $\mathcal{B}$ may be justified.

The block-oriented variant has more block sweeps and, while slightly
faster for the smaller matrices, is about $7$\% slower for the larger
matrices than the full block variant.  It may be more accurate in
certain cases (see Fig.~\ref{fig:6.2}), due to the considerably
smaller total number of the rotations performed, as shown in
Table~\ref{tbl:6.3}.  The strategies $\mathcal{M}$ and $\mathcal{B}$
are far less accurate than the new p-strategies.

\begin{figure}[hbt]
\begin{center}
\includegraphics{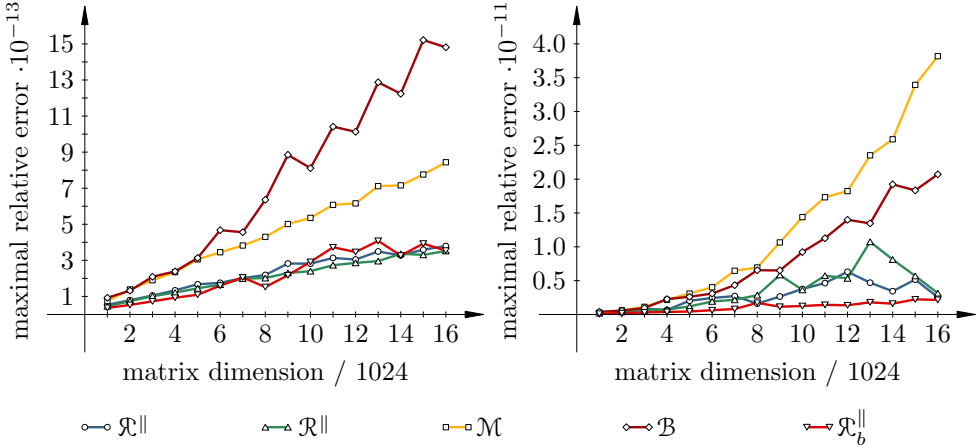}
\end{center}
\caption{The relative accuracy of the various parallel strategies on a
  single GPU.  The test spectra for the left graph are
  $\Lambda_k^{(2)}$, and for the right graph are $\Lambda_k^{(1)}$.}
\label{fig:6.2}
\end{figure}

\begin{table}[hbt]
  \caption{The number of block sweeps and the average ratios (with
    small variance) of the total number of rotations of the full block
    vs.~the block-oriented variant, per\/ $4$ spectrum types on a
    single GPU.}
  \begin{center}
    {\small\begin{tabular}{@{}lcccc@{}}
      \toprule
      Spectrum type & 1 & 2 & 3 & 4 \\
      \midrule
      average ratio of the number of rotations
      $\text{\reflectbox{$\mathcal{R}$}}^{\parallel}/\text{\reflectbox{$\mathcal{R}$}}_b^{\parallel}$
      & 2.29 & 2.10 & 2.30 & 2.08 \\
      range of the number of block sweeps $\text{\reflectbox{$\mathcal{R}$}}^{\parallel}$
      & \hpz 8--12 & 8--9\hpz & 8--11 & 7--9\hpz \\
      range of the number of block sweeps $\text{\reflectbox{$\mathcal{R}$}}_b^{\parallel}$
      & 10--14 & 9--12 & 9--14 & 9--12 \\
      \bottomrule
    \end{tabular}}
  \end{center}
  \label{tbl:6.3}
\end{table}

MAGMA's \texttt{dgesvd} routine has been tested with the sequential
(seq.)~and the parallel (par.)~($4$ threads) MKL library on machine
A\@.  The relative accuracy is identical in both cases.  Compared with
the single-GPU Fermi algorithm, MAGMA (seq.)~is $1.5$--$3$ times
slower, and MAGMA (par.)~is up to $2$ times faster.  On the other
hand, MAGMA (par.)~is, on average, $30$\%, and for the larger matrix
sizes, more than $45$\% slower than the block-oriented $4$-GPU Fermi
(solve) implementation.  The (acc.)~implementation is about $35$\%
slower than (solve) (see Fig.~\ref{fig:6.3}), and only marginally more
accurate (see Fig.~\ref{fig:6.4}).  For the matrix orders of at least
$4096$, the fastest Jacobi implementation on $4$ GPUs is about $2.7$
times faster than the fastest one on $1$ GPU.

MAGMA's accuracy is comparable to a single-GPU algorithm for the
well-con\-di\-tioned test matrices, and better than a multi-GPU
algorithm, but in the (separately tested) case of matrices with badly
scaled columns ($\kappa_2 \approx 10^{12}$), the relative errors of
MAGMA could be more than $20$ times worse than the Jacobi ones.

\begin{figure}[hbt]
\begin{center}
\includegraphics{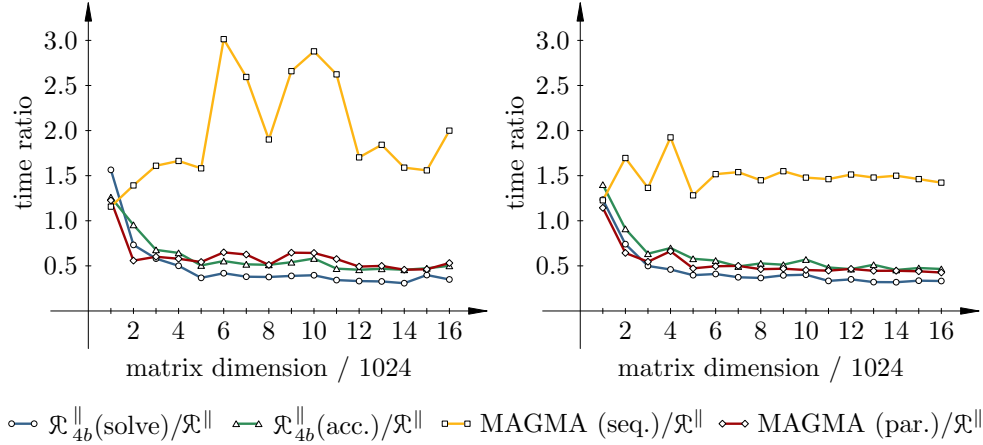}
\end{center}
\caption{The wall time ratio of the block-oriented\/ $4$-GPU Fermi
  implementations and
  M\kern-0.11ptA\kern-0.11ptG\kern-0.11ptM\kern-0.11ptA vs.~a single
  GPU.  The test spectra for the left graph are $\Lambda_k^{(2)}$, and
  for the right graph are $\Lambda_k^{(4)}$.}
\label{fig:6.3}
\end{figure}

\begin{figure}[hbt]
\begin{center}
\includegraphics{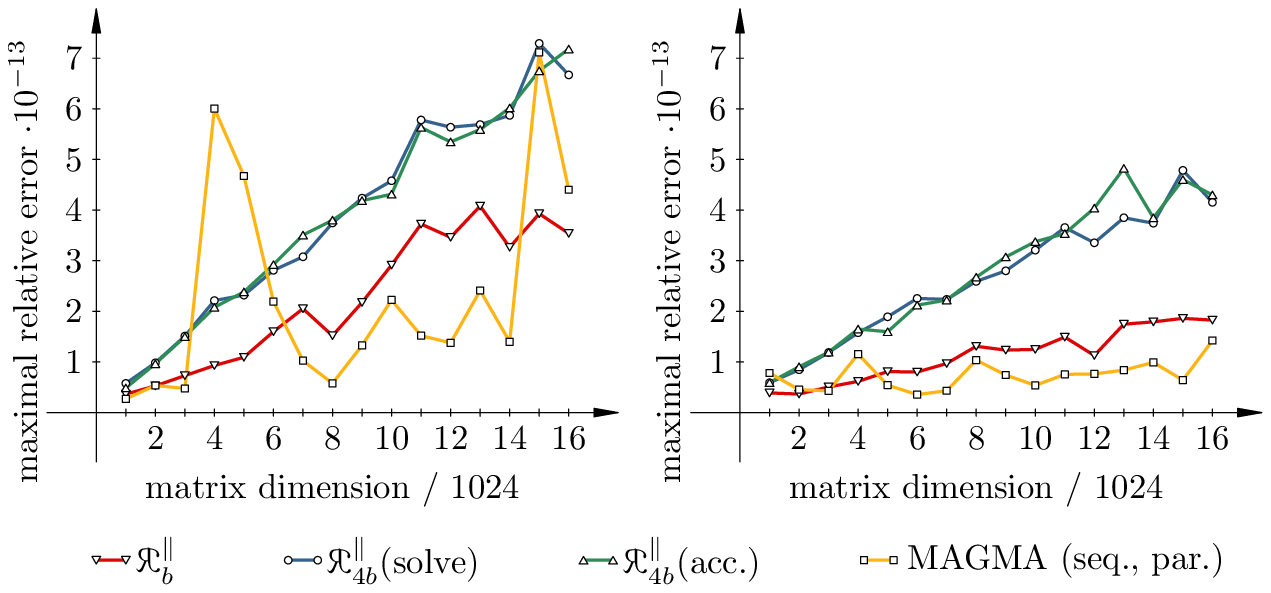}
\end{center}
\caption{The relative accuracy of the block-oriented\/ $1$- and\/
  $4$-GPU Fermi implementations and MAGMA\@.  The test spectra for the
  left graph are $\Lambda_k^{(2)}$, and for the right graph are
  $\Lambda_k^{(4)}$.}
\label{fig:6.4}
\end{figure}

Unlike MAGMA, the Jacobi GPU algorithms are perfectly scalable to an
arbitrary number of GPUs, when the matrix order is a growing function
of the number of assigned GPUs.  That makes the Jacobi-type algorithms
readily applicable on the contemporary large-scale parallel computing
machinery, which needs to leverage the potential of a substantial
amount of numerical accelerators.
%
%
\section*{Conclusions}
%
%
In this paper we have developed a set of new parallel Jacobi
strategies, both faster and more accurate than the widely used ones.
The new strategies may be seen as the generalizations of the
Mantharam--Eberlein block-recursive
strategy~\cite{Mantharam-Eberlein-93} to all even matrix orders.
These new strategies are combined with the multi-level blocking and
parallelization techniques explored
in~\cite{Hari-SingerSanja-SingerSasa-2010,Hari-SingerSanja-SingerSasa-2014,SingerSanja-SingerSasa-Novakovic-Uscumlic-Dunjko-2012,SingerSanja-SingerSasa-Novakovic-Davidovic-Bokulic-Uscumlic-2012,Novakovic-SingerSanja-2011},
to deliver the Jacobi-type (H)SVD algorithms for the graphics
processing unit(s), competitive with the leading hybrid (CPU $\!+\!$ GPU)
alternatives, like MAGMA\@.  The new algorithms are carefully designed
to use a CPU primarily as a controlling unit.  To this end, a
collection of the auxiliary shared-memory routines for the concurrent
formation of the Gram matrices, the Cholesky and QR factorizations,
and the numerically robust vector $2$-norm computations are proposed.
The numerical results confirm that in the massively parallel GPU case
the Jacobi-type methods retain all the known
advantages~\cite{Drmac-Veselic-2008,Drmac-Veselic-2008a}, while
exhibiting noteworthy speed.
%
%
{\appendix\section{Parallel norm computation}\label{sec:nrm}
%
%
An essential prerequisite for computing the Householder reflectors and
the Jacobi rotations~\cite{Drmac-97a} is obtaining the column norms
(effectively, the sums of squares) reliably, avoiding the possible
underflows and overflows of an ordinary scalar product.  However, a
strictly sequential nature of LAPACK's \texttt{DLASSQ} is unsuitable
for parallel processing.  Therefore, we propose an alternate
procedure, \texttt{DRDSSQ}, based on the parallel reduction concept.

Let $\mu$ be the smallest and $\nu$ the largest positive normalized
floating-point number, $\varepsilon$ the maximal relative roundoff
error ($\varepsilon = 2^{-53}$ for \texttt{double} with rounding to
nearest), $\gamma = 1 - \varepsilon$, $\delta = 1 + \varepsilon$, and
$x$ a vector of length $n$, with no special values ($\pm \infty$,
\texttt{NaN}s) for its components.  A floating-point approximation of
an exact quantity $\xi$ is denoted by $\rn(\xi)$, $\ru(\xi)$, or
$\rz(\xi)$, for rounding to nearest, to $+\infty$, or to $0$,
respectively.

Find $M \assgn \max_i|x_i|$.  If $M = 0$, $x$ is a zero vector.  Else,
there exists the smallest non-zero $|x_i|$, which can be computed as
$m \assgn \min_i|x_i'|$, where $x_i' = x_i$ for $|x_i| > 0$, and
$x_i' = \nu$ otherwise.  Provided enough workspace for holding, or
another technique for exchanging the partial results (such as the warp
shuffle primitives of the Kepler GPU architecture), $M$ and $m$ could
be found by a parallel min/max-reduction of $x$.

If the floating-point subnormals and infinity are supported,
inexpensive, and safe to compute with (i.e., no exceptions are raised,
or the non-stop exception handling is in effect), a sum of $x_i^2$
might also be computed.  If the sum does not overflow, and a
satisfactory accuracy is found to be maintained (e.g., the underflows
could not have happened if $\rn(m^2) \geq \mu$), \texttt{DRDSSQ} stops
here.

Otherwise, note that the depth of a reduction tree for the summation
of $x_i^2$ is $\lceil \lg n \rceil$, and at each tree level (the first
one being level $0$) at most $\delta$ relative error is accumulated.
Inductively, it follows that if, for some $s = 2^{\ell}$,
\begin{displaymath}
  2^{\lceil \lg n \rceil} (s M)^2 \delta^{(1 + \lceil\lg n\rceil)} \leq \nu,
\end{displaymath}
then the sum of $(s x_i)^2$ cannot overflow.  Also, if
$(s m)^2 \gamma \geq \mu$, for some $s = 2^k$, then no $(s x_i)^2$ can
underflow.  If some $j$ could be substituted for both $k$ and $\ell$,
it would define a scaling factor that simultaneously protects from the
potential overflows and underflows.  When the range of values of $x$
does not permit a single scaling, the independent scalings of too
large and too small values of $x$ should be performed.

Such a scaling of $x_i$ by $s$ in the binary floating-point arithmetic
introduces no rounding errors and amounts to a fast integer addition
of the exponents of $x_i$ and $s$.  Instead of the scale factors
themselves, only their exponents need to be stored and manipulated as
machine integers.  For clarity, the scales remain written herein as
the integer powers of $2$.  A pair $(s,y)$ thus represents a number
with the same precision as $y$, but with the exponent equal to a sum
of the exponents of $s$ and $y$.

As motivated above, define the safe, inclusive bounds $\tilde{\mu}$
(lower) and $\hat{\nu}$ (upper) for the values of $x$ for which no
overflow nor underflow can happen, as
\begin{displaymath}
  \tilde{\mu} = \sqrt{\mu / \gamma},\quad
  \delta_n = 2^{\lceil \lg n \rceil} \delta^{(1 + \lceil \lg n \rceil)},\quad
  \hat{\nu} = \sqrt{\nu / \delta_n}.
\end{displaymath}
Consider the following computations over a partition of the set of
values of $x$:
\begin{compactitem}
\item if $[m, M] \cap [\tilde{\mu}, \hat{\nu}] \neq \emptyset$, set
  $s_1 = 1 = 2^0$ (no scaling needed), and compute
  \begin{displaymath}
    \sigma_1^2 = \sum_{i=1}^n \bar{x}_i^2,\quad
    \bar{x}_i =
      \begin{cases}
        x_i, & \tilde{\mu} \leq |x_i| \leq \hat{\nu},\\
        0,   & \text{otherwise},
      \end{cases}
  \end{displaymath}
\item if $M > \hat{\nu}$, take the largest $s$ such that
  $s M \leq \hat{\nu}$, denote it by $s_2$, and compute
  \begin{displaymath}
    \sigma_2^2 = \sum_{i=1}^n (s_2 \hat{x}_i)^2,\quad
    \hat{x}_i =
      \begin{cases}
        x_i, & |x_i| > \hat{\nu},\\
        0, & \text{otherwise},
      \end{cases}
  \end{displaymath}
\item if $m < \tilde{\mu}$, take the smallest $s$ such that
  $s m \geq \tilde{\mu}$, denote it by $s_0$, and compute
  \begin{displaymath}
    \sigma_0^2 = \sum_{i=1}^n (s_0 \tilde{x}_i)^2,\quad
    \tilde{x}_i =
      \begin{cases}
        x_i, & |x_i| < \tilde{\mu},\\
        0, & \text{otherwise}.
      \end{cases}
  \end{displaymath}
\end{compactitem}
From $m$, $M$, $\tilde{\mu}$, $\hat{\nu}$ it is known in advance which
partial sums are necessarily $0$, and the procedure should be
simplified accordingly.  If, e.g., $m \geq \tilde{\mu}$, then
$\sigma_0^2 = 0$.

A C/C++ implementation of finding $s_0 = 2^k$ or $s_2 = 2^{\ell}$ is
remarkably simple.  An expression \texttt{y\,=\,frexp(x,\,\&e)} breaks
$\mathtt{x}$ into $0.5 \leq \mathtt{y} < 1$ and $\mathtt{e}$ such that
$2^{\mathtt{e}} \mathtt{y} = \mathtt{x}$.  Let $\mathtt{f} = m$,
$\mathtt{t} = \ru(\tilde{\mu})$, $\mathtt{j} = k$ for $s_0$, or
$\mathtt{f} = M$, $\mathtt{t} = \rz(\hat{\nu})$, $\mathtt{j} = \ell$
for $s_2$. Also, let \texttt{fy\,=\,frexp(f,\,\&fe)} and
\texttt{ty\,=\,frexp(t,\,\&te)}.  Then $\mathtt{j}$ is returned by a
code fragment:
\begin{displaymath}
\verb|j = (f <= t) ? (te - fe) + (fy < ty) : (te - fe) - (fy > ty)|.
\end{displaymath}

If there is more than one non-zero partial sum of squares, such
$(s_i^{-2}, \sigma_i^2)$ are expressed in a ``common form'',
$(\breve{s}_i^{-2}, \breve{\sigma}_i^2)$, where
$0.5 \leq \breve{\sigma}_i^2 < 2$, and the scales' exponents remain
even.  Let $(s_i^{-2}, \sigma_i^2) = (2^j, 2^m y)$, where $y$ is a
significand of $\sigma_i^2$.  Since $\sigma_i^2$ is normalized by
construction, $1 \leq y < 2$.  Define $m' = -(m \bmod 2)$ and
$j' = j + m - m'$.  Then $m' \in \{-1, 0\}$, $j'$ remains even, and
$(\breve{s}_i^{-2}, \breve{\sigma}_i^2) = (2^{j'}, 2^{m'} y)$.

The common form makes ordering the pairs by their magnitudes
equivalent to ordering them lexicographically.  First, we find the two
(out of at most three) partial sums which are the smallest by
magnitude.  We then add these partial sums together, such that the
addend smaller by magnitude is rescaled to match the scale of the
larger one.  Let
$(s_+^{-2}, \sigma_+^2) = (\breve{s}_{\leq}^{-2}, \breve{\sigma}_{\leq}^2) + (\breve{s}_>^{-2}, \breve{\sigma}_>^2)$,
with
$(\breve{s}_{\leq}^{-2}, \breve{\sigma}_{\leq}^2) \leq (\breve{s}_>^{-2}, \breve{\sigma}_>^2)$.
Then $s_+^{-2} = \breve{s}_>^{-2}$,
$s_-^{-2} = \breve{s}_{\leq}^{-2} / \breve{s}_>^{-2}$, and
$\sigma_+^2 = s_-^{-2} \breve{\sigma}_{\leq}^2 + \breve{\sigma}_>^2$.

If one more addition is needed, $(s_+^{-2}, \sigma_+^2)$ has to be
brought into the common form $(\breve{s}_+^{-2}, \breve{\sigma}_+^2)$,
and summed with the remaining addend by the above procedure.  However,
both $(s_2^{-2}, \sigma_2^2)$ and $(s_0^{-2}, \sigma_0^2)$ have to be
computed only when $n \tilde{\mu}^2 \approx \varepsilon \hat{\nu}^2$.
Such large $n$ seldom occurs.  In either case, accuracy of the final
result is maintained by accumulating the partial sums in the
nondecreasing order of their magnitudes.

The result of \texttt{DRDSSQ} is $(s^{-2}, \sigma^2)$, and the norm of
$x$ is $\|x\|_2 = \sqrt{\sigma^2} / s$.  If $\|x\|_2$ overflows or
underflows for $x$ a column of $G$, the input factor should be
initially rescaled (if possible).  A procedure similar to
\texttt{DRDSSQ} is implementable wherever the parallel reduction is a
choice (e.g., with \texttt{MPI\_Allreduce} operation).

By itself, \texttt{DRDSSQ} does not guarantee numerical
reproducibility, if the underlying parallel reductions do not possess
such guarantees.  The ideas from~\cite{Demmel-Nguyen-2013} might be
useful in that respect.
%
%
\section{A choice of the rotation formulas}\label{sec:rot}
%
%
In the block Jacobi algorithms, it is vital to preserve
($J$-)orthogonality of the accumulated $V$.  In the hyperbolic case,
the perturbation of the hyperbolic singular values also depends on the
condition number of
$V$~\cite[Proposition~4.4]{Hari-SingerSanja-SingerSasa-2014}.  A
simple attempt would be to try to compute each rotation as
($J$-)orthogonal as possible, without sacrificing performance.

Departure from a single rotation's ($J$-)orthogonality should be
checked in a sufficiently high (e.g., $128$-bit quadruple) precision,
as $d_t = |(\cos^2 \varphi + \sin^2 \varphi) - 1|$, or as
$d_h = |(\cosh\varphi - \sinh\varphi)(\cosh\varphi + \sinh\varphi) - 1|$,
with $\sin\varphi = \cos\varphi * \tan\varphi$, or
$\sinh\varphi = \cosh\varphi * \tanh\varphi$.  For each binary
exponent $-53 \leq \mathtt{e} \leq 53$ we generated, on a CPU,
$2^{24}$ uniformly distributed pseudorandom $52$-bit integers
$\mathtt{m}_i$, to form $|\ct 2\varphi|_i$ with the exponent
$\mathtt{e}$ and the non-implied bits of the significand equal to
$\texttt{m}_i$.  From $|\ct 2\varphi|_i$ and~(\ref{4.4})--(\ref{4.6})
we computed $(\tn\varphi)_i$, $(\cs_1\varphi)_i$, and
$(\cs_2\varphi)_i$ in double precision.  In the Fortran's quadruple
arithmetic the corresponding $d_t$ and $d_h$ were then found and
averaged, over all tested exponents.  The results are summarized in
Table~\ref{tbl:B.1}.

\begin{table}[hbt]
  \caption{The average departures from ($J$-)orthogonality of the
    rotations given by~(\ref{4.4})--(\ref{4.6}).}
  \begin{center}
    \begin{tabular}{@{}cccc@{}}
      \toprule
      \multicolumn{2}{c}{trigonometric rotations} & \multicolumn{2}{c}{hyperbolic rotations}\\
      $d_t$ with $\cos_1\varphi$ & $d_t$ with $\cos_2\varphi$ & $d_h$ with $\cosh_1\varphi$ & $d_h$ with $\cosh_2\varphi$\\
      \midrule
      $8.270887 \cdot 10^{-17}$ & $8.335956 \cdot 10^{-17}$ & $7.575893 \cdot 10^{-17}$ & $6.586691 \cdot 10^{-17}$\\
      \bottomrule
    \end{tabular}
  \end{center}
  \label{tbl:B.1}
\end{table}

Table~\ref{tbl:B.1} indicates that $\cosh_2\varphi$ produces, on
average, more $J$-orthogonal hyperbolic rotations than
$\cosh_1\varphi$.  In the trigonometric case it is the opposite, but
with a far smaller difference.  Orthogonality of the final $V$ was
comparable in the tests for both trigonometric versions, often
slightly better (by a fraction of the order of magnitude) using
$\cos_2\varphi$.  Therefore, $\cs_2\varphi$ formulas were chosen for a
full-scale testing.

If $\RSQRT(x) = 1 / \sqrt{x}$ were correctly rounded in CUDA,
$\cs_1\varphi$ could be written as
\begin{equation}
  \cs_1'\varphi = \RSQRT(\FMA(\mathfrak{t} \cdot \tn\varphi, \tn\varphi, 1)).
  \label{B.1}
\end{equation}
With~(\ref{B.1}) and a correctly rounded-to-nearest $\RSQRT$ prototype
CUDA implementation\footnote{Courtesy of Norbert Juffa of NVIDIA.}
there was a further improvement of orthogonality of $V$.  Although
(\ref{B.1}) has only one iterative operation ($\RSQRT$) instead of two
($\RCP$ and $\sqrt{x}$), and thus has a potential to be faster than
(\ref{4.6}), we omitted (\ref{B.1}) from the testing due to a slowdown
of about $1$\% that we expect to vanish with the subsequent
implementations of $\RSQRT$.

It is still far from conclusive which formulas from~(\ref{4.6})
or~(\ref{B.1}), and for which ranges of $\ct 2\varphi$, should be
used.  However, $\cs_2\varphi$ or $\cs_1'\varphi$ formulas might be an
alternative to the established $\cs_1\varphi$ ones.  A deeper analysis
is left for future work.}
%
%
\section*{Acknowledgments}
The author would like to thank Norbert Juffa of NVIDIA for providing
a prototype CUDA implementation of the correctly rounded-to-nearest
$\RSQRT$ function, Prof.~Hrvoje Jasak for generously giving access to
the Kepler GPUs donated by NVIDIA as a part of the Hardware Donation
Program, and Prof.~Zvonimir Bujanovi\'{c} for fruitful discussions.
Special thanks go to Prof.~Sanja Singer for drawing the figures with
MetaPost, and to Prof.~Sa\v{s}a Singer for proofreading of the
manuscript.

The author would also like to express his gratitude to the
anonymous referees for their detailed and helpful suggestions that
substantially improved the manuscript.
%
%
\bibliography{095242}
\bibliographystyle{siam}
%
%
\end{document}